\newtheorem{thm}{Theorem}[section]
\newtheorem{cor}[thm]{Corollary}
\newtheorem{lem}[thm]{Lemma}
\newtheorem{prop}[thm]{Proposition}
\theoremstyle{definition}
\newtheorem{defn}[thm]{Definition}
\theoremstyle{remark}
\numberwithin{equation}{section}
\renewcommand{\bibname}{\sc}
\renewcommand{\and}{\textnormal{and}\ }
\newenvironment{proof*}{\begin{proof}}{\end{proof}}
\newcommand{\cla}{\mathcal{A}}
\newcommand{\clg}{\mathcal{G}}
\newcommand{\clf}{\mathcal{F}}
\newcommand{\clc}{\mathcal{C}}
\newcommand{\clk}{\mathcal{K}}
\newcommand{\clr}{\mathcal{R}}
\newcommand{\bbz}{\mathbb{Z}}
\newcommand{\bbr}{\mathbb{R}}
\newcommand{\bbq}{\mathbb{Q}}
\newcommand{\bbn}{\mathbb{N}}
\newcommand{\bbk}{\mathbb{K}}
\newcommand{\bbs}{\mathbb{S}}
\newcommand{\bl}{B\ell}
\newcommand{\la}{\langle}
\newcommand{\ra}{\rangle}
\newcommand{\G}{\Gamma}
\newcommand{\gu}{\Gamma^U}
\newcommand{\ru}{\mathcal{R}^U}
\def\rank{\mbox{rank}}
\def\wti{\widetilde}
\DeclareMathOperator{\Hom}{Hom}
\DeclareMathOperator{\Rep}{Rep}
\title
{Splittings of von Neumann rho--invariants of knots}
\author{Se-Goo Kim}
\address{Department of Mathematics and Research Institute for Basic
Sciences, Kyung Hee University, Seoul 130--701, Korea}
\email{sgkim@khu.ac.kr}
\urladdr{web.khu.ac.kr/\~{}sekim}
\author{Taehee Kim}
\address{Department of Mathematics, Konkuk
University, Seoul 143--701, Korea}
\email{tkim@konkuk.ac.kr} \urladdr{konkuk.ac.kr/\~{}tkim}
\def\subjclassname{\textup{2000} Mathematics Subject Classification}
\let\csname subjclassname@1991\endcsname=\subjclassname
\let\csname subjclassname@2000\endcsname=\subjclassname
\subjclass{Primary 57M25; Secondary 57N70}
\keywords{Knot, Concordance, Splitting, von Nuemann rho--invariant}
\date{\today}
\begin{document}

\begin{abstract}
We give a sufficient condition under which vanishing property of Cochran--Orr--Teichner knot concordance obstructions splits under connected sum. The condition is described in terms of self-annihilating submodules with respect to higher-order Blanchfield linking forms. This extends results of Levine and the authors on distinguishing knots with coprime Alexander polynomials up to concordance. As an application, we show that the knots constructed by Cochran, Orr and Teichner as the first examples of nonslice knots with vanishing Casson--Gordon invariants are not concordant to any knot of genus~1.
\end{abstract}

\maketitle


\section{Introduction}

Knots $K$ and $J$ in the 3--sphere $S^3$ are said to be \emph{concordant} if  there exists a (topologically) locally flat properly embedded annulus $S^1\times [0,1]$  in $S^3\times [0,1]$ whose boundary is the union of $K\times\{0\} \subset S^3\times\{0\}$ and $-J\times\{1\}\subset S^3\times\{1\}$. Here $-J$ denotes the mirror image of $J$ with reversed string orientation. A knot which is concordant to the unknot is said to be \emph{slice}. Equivalently, a knot is slice if and only if it bounds a locally flat 2--disk in the 4--ball. It is well-known that $K$ is concordant to $J$ if and only if the connected sum $K\#(-J)$ is slice. Concordance classes, which are the equivalence classes of knots modulo concordance, form an abelian group under connected sum, and this abelian group is called \emph{the knot concordance group $\clc$}. In the group $\clc$, the identity is the class of slice knots, and the inverse of the concordance class of $K$ is that of $-K$. 

In the 1960's, Levine  \cite{lev69b} defined \emph{the algebraic knot concordance group $\clg$} using Seifert forms of knots, and showed that there is a surjective homomorphism $\clc\to \clg$. A knot is \emph{algebraically slice} if it is mapped to the identity in $\clg$, or equivalently, if it has a metabolic Seifert form.
Using characters on the first homology of prime power order cyclic covers of the 3--sphere branched along a knot, Casson and Gordon \cite{cg86} exhibited the examples of nonslice knots which are algebraically slice.

It is known that in some cases we can distinguish knots up to concordance if the knots have coprime Alexander polynomials. Levine \cite{lev69a} proved that if the connected sum of two knots with coprime Alexander polynomials is algebraically slice, then so are both knots. In other words, if two knots are not algebraically slice and have coprime Alexander polynomials, then they are not algebraically concordant to each other, and hence they are not concordant. Regarding Casson--Gordon invariants, the first author showed a similar result in \cite{kim05}: if the connected sum of two knots with coprime Alexander polynomials has vanishing Casson--Gordon invariants, then so do both knots.

In \cite{cot03}, Cochran, Orr and Teichner revealed more structures of the group $\clc$ by constructing the filtration $\{\clf_n\}$ of $\clc$ which is indexed by nonnegative half-integers:
$$
    0\subset \cdots\subset \clf_{n.5}\subset \clf_n\subset\cdots\subset \clf_{1.5}\subset \clf_1 \subset \clf_{0.5}\subset \clf_0\subset \clc.
$$
For each $n\in \frac12\bbn_0$, where $\bbn_0=\bbn\cup\{0\}$, the group $\clf_n$  is the subgroup of \emph{$(n)$--solvable} knots. Roughly speaking, a knot $K$ is \emph{$(n)$--solvable} if the 0--surgery on $K$ in $S^3$, denoted $M_K$, bounds a certain $4$--manifold called an \emph{$(n)$--solution}, which can be considered as an approximation of a slice disk complement of order $n$ (see Definition~\ref{def:solvability}). The filtration $\{\clf_n\}$ reflects classical concordance invariants at low levels: a knot is algebraically slice if and only if it is $(0.5)$--solvable. If a knot is $(1.5)$--solvable, then all topological concordance invariants prior to their work in  \cite{cot03}, including Casson--Gordon invariants, vanish.

For each nonnegative integer $n$, Cochran, Orr and Teichner constructed obstructions for a knot being $(n.5)$--solvable using von Neumann $\rho$--invariants \cite[Theorem 4.2]{cot03} (also see Theorem~\ref{thm:rho invaiants}(1)). Note that slice knots are $(n)$--solvable for all $n\in \frac12\bbn_0$, and therefore these are also obstructions to a knot being slice. A basic idea of the obstructions in \cite[Theorem 4.2]{cot03} is as follows. For an $(n.5)$--solvable knot $K$, if a representation $\phi\colon \pi_1(M_K)\to \G$ for a group $\G$ satisfying certain conditions extends to an $(n.5)$--solution for $K$, then the von Neumann $\rho$--invariant $\rho(M_K,\phi)$ associated to $\phi$ vanishes. Therefore, it is essential to find representations on $\pi_1(M_K)$ that extend to $(n.5)$--solutions (or slice disk complements). For instance, to get the desired representations one can use certain quotient groups of the fundamental group of a putative $(n.5)$--solution for $K$. This method was introduced in \cite{ct07} and used by many authors.

On the other hand, the original idea in \cite{cot03} for getting representations that extend to an $(n.5)$--solution is to use \emph{rationally universal representations}. In \cite{cot03}, for each nonnegative integer $n$, they defined \emph{the rationally universal group $\G_n$}  and representations $\phi_n\colon \pi_1(M_K)\to \G_n$ via higher-order Blanchfield linking forms (see Section~\ref{subsec:universal} for more details). They also showed that if a knot $K$ is $(n.5)$--solvable, then some of these rationally universal representations $\phi_n$ extend to $(n.5)$--solutions for $K$ and the von Neumann $\rho$--invariants associated to such $\phi_n$ vanish \cite[Theorem 4.6]{cot03} (or see Definition~\ref{def:vanishing} and Theorem~\ref{thm:4.6}). In this paper, we say that a knot has \emph{vanishing $\rho$--invariants of order $n$} if it satisfies a criterion in \cite[Theorem 4.6]{cot03}. Therefore, Theorem~4.6 in \cite{cot03} can be rephrased that an $(n.5)$--solvable knot has vanishing $\rho$--invariants of order $n$. Although it is not easy to show that an arbitrarily given $(n.5)$--solvable knot $K$ has vanishing $\rho$--invariants of order $n$, these obstructions obtained using rationally universal representations have an advantage that they are described in terms of information only from the 3--manifold $M_K$, not using information from a putative $(n.5)$--solution for $K$, which is a 4--manifold bounded by $M_K$.
We remark that using these obstructions the first examples of nonslice knots with vanishing Casson-Gordon invariants were found \cite{cot03}. 

Regarding the approach for knot concordance using coprimeness of Alexander polynomials, the authors showed that if the connected sum of two knots with coprime Alexander polynomials has vanishing $\rho$--invariants of order 1, then so do both knots  \cite[Theorem 1.1]{kk08}. The aim of this paper is to generalize this splitting property of $\rho$--invariants of order 1 to  all orders $n\ge 1$, and obtain obstructions to knots being concordant. Our results are described in terms of higher-order Alexander modules and Blanchfield linking forms, which are explained below.

For a knot $K$, let  $\cla_0^{K}$ denote the (rational) Alexander module $H_1(M_K;\bbq[t^{\pm 1}])$ and $\bl_0^K$ the Blanchfield linking form defined on $\cla_0^K$. For knots $K$ and $J$, let $L=K\#  J$. It is well-known that $\cla_0^L$ splits as $\cla_0^L = \cla_0^{K} \oplus \cla_0^{J}$, and similarly we have $\bl_0^L= \bl_0^{K}\oplus \bl_0^{J}$. 
Generalizing $\cla_0^K$ and $\bl_0^K$ to higher-orders, for each positive integer $n$ and a representation $\phi_n\colon (M_K)\to \G_n$,  the \emph{$n$-th order Alexander module $\cla_n^K$ associated to $\phi_n$} and the \emph{$n$-th order Blanchfield linking form  $\bl_n^K$ associated to $\phi_n$} were defined in  \cite{cot03}. We note that the construction of $\phi_n$ depends on an inductive choice of elements $x_i\in \cla_i^K$ for $0\le i< n$, and hence so does that of $\cla_n^K$ and $\bl_n^K$.
In contrast to the case of $n=0$, for $n>0$ the module $\cla_n^L$ and the form $\bl_n^L$ do not split as a direct sum in general. Notheless, in Theorem~\ref{thm:splitting H_1} we give a criterion for $\phi_n\colon \pi_1(M_L) \to \G_n$ under which $\cla_n^L$ and $\bl_n^L$ associated to $\phi_n$ split as a direct sum. Loosely speaking, they split if $\phi_n$ is constructed by choosing $x_i$ `along $K$' for $0\le i < n$, which means that we have $x_i\in \cla_i^{K}\oplus 0\subset  \cla_i^{K} \oplus \cla_i^{J} =\cla_i^L$ where splitting of $\cla_i^L$ as a direct sum can be assumed for $i<n$  from the construction of $\phi_n$.  See Theorem~\ref{thm:splitting H_1} for more details.  Similarly, if we choose $x_i$ `along $J$', then $\cla_n^L$ and $\bl_n^L$ split as a direct sum. 

In \cite[Theorem 1.1]{kk08}, coprimeness of the Alexander polynomials of $K$ and $J$ was needed only to guarantee that every self-annihilating submodule $P_0^L$ of $\cla_0^L$ splits as $P_0^L=P_0^{K}\oplus P_0^{J}$ where $P_0^K$ and $P_0^J$ are self-annihilating submodules of $\cla_0^K$ and $\cla_0^J$, respectively. Recall that a $\bbq[t^{\pm 1}]$-submodule $P_0^K$ of $\cla_0^K$ is \emph{self-annihilating with respect to $\bl_0^K$} if $P_0^K = (P_0^K)^\perp$, where
\[
(P_0^K)^\perp:=\{x\in\cla_0^K\,|\, \bl_0^K(x,y) = 0 \mbox{ for all }y\in P_0^K\}.
\]
We generalize the condition of having coprime Alexander polynomials to higher-orders: for each nonnegative integer $n$ and  a knot $L$ where $L=K\# J$, we define the knot $L$ to have \emph{splitting self-annihilating submodules of order $n$ for $K$}, roughly speaking, if every self-annihilating submodule $P_n^L$ of $\cla_n^L$ splits as $P_n^L= P_n^{K}\oplus P_n^{J}$ (where $P_n^K$ and $P_n^J$ are self-annihilating submodules of $\cla_n^K$ and $\cla_n^J$, respectively) whenever $\phi_n\colon \pi_1(M_L)\to \G_n$ is constructed by choosing $x_i$ `along $K$' for $0\le i\le n-1$ in such a way that each $x_i$ is contained in a self-annihilating submodule of $\cla_i^K$. See Definition~\ref{def:splitting sas} for the precise definition. We remark that when $n>0$, this condition is not symmetric with respect to $K$ and $J$. 

We generalize the coprimeness condition to higher-orders as described above for technical reasons. Instead of the condition for higher-orders defined above, one might want to use the condition of having coprime \emph{higher-order Alexander polynomials}. However, compared to the case of $n=0$, we get more technical difficulties for using higher-order Alexander polynomials since higher-order Alexander polynomials are defined in \emph{noncommutative rings}. For example, for a noncommutative ring $R$ and $a\in R$, the element $a$ does not annihilates the right $R$-module $R/aR$ in general. Also note that when $n=0$, in the case of classical Alexander polynomials and Blanchfield linking forms, the condition of having splitting self-annihilating submodules is weaker than that of having coprime Alexander polynomials.

Now we are ready to state our main theorem.
\begin{thm}\label{main}
Let $n$ be a positive integer. Let $K$ and $J$ be knots such that $K\# J$ has splitting self-annihilating submodules of order $n-1$ for $K$. If $K\# J$ has vanishing $\rho$--invariants of order $n$, then so does $K$.
\end{thm}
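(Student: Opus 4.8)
The plan is to reduce the statement about $K$ to the hypothesis about $L = K\#J$ by carefully \emph{transporting data along $K$}. Suppose $K$ has \emph{non}vanishing $\rho$--invariants of order $n$; by the criterion of \cite[Theorem 4.6]{cot03} (our Definition~\ref{def:vanishing}), there is an inductive choice of elements $x_0^K\in\cla_0^K,\dots,x_{n-1}^K\in\cla_{n-1}^K$, each lying in a self-annihilating submodule $P_i^K$ of $\cla_i^K$ with respect to $\bl_i^K$, giving a representation $\phi_n^K\colon\pi_1(M_K)\to\G_n$ for which the associated $\rho$--invariant does not vanish. The first step is to use these choices to build a representation $\phi_n^L\colon\pi_1(M_L)\to\G_n$ ``along $K$'': at stage $i$ one has (inductively, from Theorem~\ref{thm:splitting H_1}) a splitting $\cla_i^L=\cla_i^K\oplus\cla_i^J$, and one takes $x_i^L := x_i^K\oplus 0$. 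By Theorem~\ref{thm:splitting H_1} this yields $\cla_n^L=\cla_n^K\oplus\cla_n^J$ and $\bl_n^L=\bl_n^K\oplus\bl_n^J$, and the representation $\phi_n^L$ restricts (on the summand coming from $K$) to $\phi_n^K$.

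The second step is to show that the $\rho$--invariant associated to $\phi_n^L$ does not vanish, using the hypothesis that $L$ has splitting self-annihilating submodules of order $n-1$ for $K$. Here is where the splitting hypothesis is used: to apply the vanishing criterion to $L$ one must consider self-annihilating submodules $P_n^L$ of $\cla_n^L$ (the top-level ones appearing in \cite[Theorem 4.6]{cot03}); the hypothesis guarantees that every such $P_n^L$ splits as $P_n^K\oplus P_n^J$ with $P_n^K$, $P_n^J$ self-annihilating with respect to $\bl_n^K$, $\bl_n^J$. One then combines the additivity of von Neumann $\rho$--invariants under connected sum along $\G_n$ --- so that $\rho(M_L,\phi_n^L)=\rho(M_K,\phi_n^K)+\rho(M_J,\phi_n^J)$ for a suitably compatible choice on the $J$ side --- with the fact that the $J$ side can be arranged to contribute zero (choosing $x_i^J=0$), to conclude $\rho(M_L,\phi_n^L)=\rho(M_K,\phi_n^K)\ne 0$. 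Since every relevant $\phi_n^L$ and $P_n^L$ for $L$ arises, via the splitting, from admissible data on $K$ and $J$, nonvanishing for $K$ propagates to nonvanishing for $L$, which is the contrapositive of the theorem.

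More precisely, the argument is an induction on $n$, with the case $n=1$ being \cite[Theorem 1.1]{kk08}. At the inductive step one must check that the ``along $K$'' construction is compatible at every level $i<n$ with the splitting hypotheses: the choice $x_i^L=x_i^K\oplus 0$ must land in a self-annihilating submodule of $\cla_i^L$, which by the order-$(n-1)$ splitting hypothesis for $K$ is $P_i^K\oplus P_i^J$; one takes the $J$--component of the chosen element to be $0\in P_i^J$, which is legitimate since $0$ always lies in any submodule. Thus the data defining $\phi_n^L$ is of exactly the form required by Definition~\ref{def:splitting sas}, so the splitting conclusions of Theorem~\ref{thm:splitting H_1} apply at the top level as well.

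The main obstacle I expect is the bookkeeping in the second step: verifying that \emph{every} representation and self-annihilating submodule relevant to the vanishing criterion for $L$ (not just the ones built by hand from $K$) is accounted for by the splitting hypothesis, so that nonvanishing genuinely transfers. In particular one must be careful that the inductive choices $x_i^L$ on the $L$ side that Definition~\ref{def:vanishing} quantifies over are forced to be ``along $K$'' in the coordinates where it matters, or else show the $J$--contribution is harmless; this is precisely what the (asymmetric) hypothesis ``splitting self-annihilating submodules of order $n-1$ for $K$'' is designed to supply, and checking that it does its job at each inductive level --- together with the additivity and the behavior of $\G_n$ under connected sum --- is the technical heart of the proof.
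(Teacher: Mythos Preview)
Your contrapositive is set up on a mistaken reading of Definition~\ref{def:vanishing}. The condition ``$K$ has vanishing $\rho$--invariants of order $n$'' is of the form $\exists P_0\,\forall x_0\in P_0\,[\rho=0\ \wedge\ \exists P_1\,\forall x_1\in P_1\,[\rho=0\ \wedge\ \cdots]]$, so its negation begins with a \emph{universal} quantifier over $P_0$: for \emph{every} self-annihilating $P_0\subset\cla_0^K$ something fails, and that failure may occur at any level of the nesting. It is therefore not true that ``there is an inductive choice $x_0^K,\dots,x_{n-1}^K$ each lying in some self-annihilating $P_i^K$ with $\rho(K,\phi_n^K)\ne 0$''; even a knot satisfying the vanishing condition can have such bad sequences, since the definition only asserts the \emph{existence} of one good tree of $P_i$'s. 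Correspondingly, producing a single bad $\phi_n^L$ does not show $L$ fails the condition: you would have to show that \emph{every} candidate $P_0^L$ (and recursively every $P_i^L$) is bad. Your last paragraph hints that you sense this, but the mechanism you sketch---building $\phi_n^L$ from fixed $K$-data---goes in the wrong direction to handle the universal quantifier over $P_0^L$. (There is also an off-by-one: the top-level submodules in Definition~\ref{def:vanishing} are $P_{n-1}$, not $P_n$, which is why the hypothesis is splitting of order $n-1$.)

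The paper avoids this tangle by arguing \emph{directly}, not by contrapositive, and by first repackaging Definition~\ref{def:vanishing} as the existence of a \emph{vanishing system} $\bbs_n$ (Definition~\ref{def:vanishing 2}, Proposition~\ref{prop:vanishing}). One starts from a vanishing system $\bbs_n$ for $L$, restricts to those tuples $(P_0,\dots,P_{n-1})\in\bbs_n$ whose defining $x_i$'s lie in the $K$-summand (this subset is shown to be nonempty using Property~(2) of $\bbs_n$), and uses the splitting hypothesis to write each $P_i=P_i^K\oplus P_i^J$; the collection of $K$-components $(P_0^K,\dots,P_{n-1}^K)$ is then checked to be a vanishing system for $K$. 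Your additivity step is the one ingredient that survives and matches the paper: the standard cobordism $C$ between $M_K\cup M_J$ and $M_L$ has trivial $L^{(2)}$- and ordinary signatures, giving $\rho(L,\phi_i)=\rho(K,\phi_i^K)+\rho(J,\phi_i^J)$, and $\rho(J,\phi_i^J)=0$ because $\phi_i^J$ factors through $\phi_0$ and $J$ is algebraically slice (the existence of $P_0^J$ forces this). The point is that this additivity is applied to representations \emph{coming from} the given $\bbs_n$ for $L$, not to representations manufactured from hypothetical $K$-data.
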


\noindent Note that the case $n=1$ in Theorem~\ref{main} results in Theorem~1.1 in \cite{kk08}.  

Since an $(n.5)$--solvable knot has vanishing $\rho$--invariants of order $n$, Theorem~\ref{main} can be used as a concordance obstruction.
As an application of Theorem~\ref{main}, we show the following: in \cite{cot03, cot04}, Cochran, Orr and Teichner gave the first examples of knots which are $(2)$--solvable but not $(2.5)$--solvable. These knots are also the first examples of nonsilce knots with vanishing Casson-Gordon invariants. Let $K$ be one of these knots. Recall that \emph{the concordance genus of $K$} is defined to be the minimum of the genus of $K'$ among all knots $K'$ concordant to $K$. In Theorem~\ref{thm:example}, we show that $K$ is not concordant to any knot whose Alexander polynomial has  degree less than or equal to two, which implies that $K$ is not concordant to any knot of genus 1. Since $K$ has genus 2,  this shows that $K$ has concordance genus 2. Note that these are the first examples of knots that are $(2)$--solvable that are proved to be of concordance genus greater than 1.

We remark that in \cite{chl11} Cochran, Harvey and Leidy defined a refinement of the filtration $\{\clf_n\}$ via sequences of (Alexander) polynomials, and using it they distinguished, up to concordance, knots which have coprime Alexander polynomials or coprime higher-order Alexander polynomials. They also showed that the aforementioned knots in \cite{cot03,cot04} are not concordant to their knots in \cite{chl09}, which are not restricted to genus one knots. 

Our results are related to the following more general question which motivated our work: if two nonslice knots have coprime Alexander polynomials, are they mutually nonconcordant? One can see that the answer is no in smooth concordance due to the existence of knots with trivial Alexander polynomial that are not smoothly slice. In topological concordance, however, the answer is unknown to the authors' knowledge. Theorem~\ref{main} above and the work of Levine, Cochran--Harvey--Leidy, and the authors in \cite{lev69a, chl09, kim05, kk08} can be considered as evidence that the answer might be yes. In \cite{chl09} one can also find interesting results along this direction which involve the quotients of successive terms of the filtration $\{\clf_n\}$ and higher-order Alexander polynomials. 
Also regarding the question above, one may ask whether or not $K\# J$ has splitting self-annihilating submodules of order $n$ for $K$ (and for $J$) for all $n> 0$ if $K$ and $J$ have coprime Alexander polynomials, but the authors do not know the answer. 

This paper is organized as follows. In Section~\ref{sec:preliminaries}, we give background materials such as the notion of $(n)$--solvability, rationally universal groups, higher-order Alexander modules and Blanchfiled linking forms. Section~\ref{sec:vanishing} is devoted to the proof of Theorem~\ref{main}. We give examples in Section~\ref{sec:examples}. In this paper, unless mentioned otherwise all manifolds are assumed to be connected and oriented.


\section{Preliminaries}
\label{sec:preliminaries}

In this section, we briefly review work of Cochran, Orr and Teichner on the filtration $\{\clf_n\}$ of $\clc$ in \cite{cot03} to have the paper self-contained. We claim no originality in this section except Subsection~\ref{subsec:vanishing}. 

\subsection{Filtration of the knot concordance group}\label{subsec:filtration}

Let $G$ be a group and $n$ a nonnegative integer. Then the \emph{$n$--th derived
group of $G$}, denoted $G^{(n)}$, is defined inductively as follows:
$G^{(0)}=G$ and $G^{(k)}=[G^{(k-1)},G^{(k-1)}]$ for $k\ge 1$.

Let $M$ be a closed 3--manifold and $W$ a spin 4--manifold such that $\partial W = M$. Let $\pi=\pi_1(W)$. For $n\ge 0$, we have the intersection form
$$
\lambda_n\colon H_2(W;\bbz[\pi/\pi^{(n)}])\times H_2(W;\bbz[\pi/\pi^{(n)}]) \to \bbz[\pi/\pi^{(n)}].
$$
Also there is the self-intersection invariant $\mu_n$. Refer to \cite[Section 7]{cot03} for the definition of $\mu_n$.

\begin{defn}\label{def:solvability}
Let $n$ be a nonnegative integer. A closed 3--manifold $M$ is \emph{$(n)$--solvable via $W$} if there exists a spin 4--manifold $W$ with $\partial W=M$ satisfying the following: 
    \begin{enumerate}
    \item $H_1(M;\bbz)\to H_1(W;\bbz)$ is an isomorphism,
    \item for $r=\frac12\rank_\bbz H_2(W;\bbz)$ and $1\le i,j\le r$, there exist elements $x_i$ and $y_j$  in $H_2(W;\bbz[\pi/\pi^{(n)}])$ such that $\lambda_n(x_i,x_j) = \mu_n(x_i) = 0$ and $\lambda_n(x_i,y_j) = \delta_{ij}$ for all $i,j$.
    \end{enumerate}
Then $W$ is said to be an \emph{$(n)$--solution} for $M$. The 3--manifold $M$ is \emph{$(n.5)$--solvable via $W$}, and $W$ is called an \emph{$(n.5)$--solution} for $M$, if $W$ satisfies the following additional condition:
    \begin{itemize}
    \item[(3)] for $1\le i \le r$, there exist elements $\wti{x_i}$  mapping to $x_i$ under the canonical homomorphism $H_2(W;\bbz[\pi/\pi^{(n+1)}])\to H_2(W;\bbz[\pi/\pi^{(n)}])$ such that $\lambda_{n+1}(\wti{x_i},\wti{x_j}) = \mu_{n+1}(\wti{x_i}) = 0$ for all $i,j$.
    \end{itemize}
For a knot $K$ and $m\in\frac12 \bbn_0$, we say that $K$ is \emph{$(m)$--solvable via $W$} if $M_K$, the 0--surgery  on $K$ in $S^3$, is $(m)$--solvable via $W$. In this case, $W$ is said to be an \emph{$(m)$--solution for $K$}. We denote by $\clf_m$ the set of all (concordance classes of) $(m)$--solvable knots.
\end{defn}

In \cite{cot03}, Cochran, Orr and Teichner  showed that $\clf_m\subset \clf_n$ if $m\ge n$,  and they also showed that slice knots are $(m)$--solvable for all $m\in \frac12 \bbn_0$. There are other variations of the notion of $(m)$--solvability such as rational $(m)$--solvability which uses rational coefficients instead of integer coefficients in its definition. It is easy to see that $(m)$--solvable knots are rationally $(m)$--solvable.

\subsection{Rationally universal groups and higher-order invariants}\label{subsec:universal}
For a nonnegative integer $n$, we define \emph{rationally universal groups} $\G_n$ as follows.

\begin{defn}\cite[Definition 3.1]{cot03}\label{def:ru groups}
We define \emph{the 0-th rationally universal group $\G_0$} to be  the infinite cyclic group $\bbz$ generated by $\mu$. For a nonnegative integer $n$, we let
\[
\clr_n=(\bbq\G_n)(\bbq[\G_n,\G_n]-\{0\})^{-1}
\]
and define \emph{the $(n+1)$-st rationally universal group $\G_{n+1}$} to be $\clk_n/\clr_n\rtimes \G_n$, where $\clk_n$ is the right ring of quotients of $\bbq\G_n$. That is, the ring $\clk_n$ is the (skew) quotient field of $\bbz\G_n$.
\end{defn}

In \cite{cot03}, these $\G_n$ and $\clr_n$ are denoted by $\gu_n$ and $\ru_n$, respectively. Note that $\clk_n$ are equipped with involutions induced from taking $g\mapsto g^{-1}$ for $g\in \G_n$. We denote the involution of $a\in \clk_n$ by $\bar{a}$.
One can easily see that $\G_n$ is $(n)$--solvable, that is, $(\G_n)^{(n+1)} = 0$, and that $\G_n$ embeds into $\clr_n$. Furthermore, the groups $\G_n$ are poly-torsion-free-abelian (abbreviated PTFA), that is, each $\G_n$ admits a normal series of finite length whose successive quotient groups are torsion-free abelian.

Note that $\clr_0=\bbq\G_0=\bbq[\mu^{\pm 1}]$ and $\clk_0=\bbq(\mu)$, where $\bbq(\mu)$ is the quotient field of $\bbq[\mu^{\pm 1}]$. For higher order cases, we have the following proposition.

\begin{prop}\cite[Corollary 3.3]{cot03}\label{prop:ru}
For $n\ge 0$, the rings $\clr_n$ are left and right principal ideal domains and isomorphic to (skew) Laurent polynomial rings $\bbk_n[\mu^{\pm 1}]$, where $\bbk_n$ are the (skew) quotient fields of $\bbz[\G_n,\G_n]$. In particular, the (skew) fields $\clk_n$ are isomorphic to $\bbk_n(\mu)$.
\end{prop}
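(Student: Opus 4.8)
The plan is to prove the statement by induction on $n$, establishing simultaneously at each level: (i) the abelianization $\G_n/[\G_n,\G_n]$ is infinite cyclic, generated by the image of $\mu$; (ii) $\clr_n$ is isomorphic to the twisted Laurent polynomial ring $\bbk_n[\mu^{\pm 1}]$; and (iii) $\clr_n$ is a left and right principal ideal domain whose Ore quotient field is $\bbk_n(\mu)\cong\clk_n$. The case $n=0$ is immediate, as already observed: $\G_0=\langle\mu\rangle\cong\bbz$ has trivial commutator subgroup, $\bbk_0=\bbq$, and $\clr_0=\bbq[\mu^{\pm 1}]$, $\clk_0=\bbq(\mu)$.

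For the inductive step, first suppose (i) holds at level $n$. Since $\G_n$ is PTFA, so is its subgroup $[\G_n,\G_n]$, hence $\bbz[\G_n,\G_n]$ (equivalently $\bbq[\G_n,\G_n]$) is an Ore domain with skew quotient field $\bbk_n$, and the set $S_n:=\bbq[\G_n,\G_n]\setminus\{0\}$ is a two-sided Ore set in $\bbq\G_n$. Decomposing $\bbq\G_n$ as a crossed product $\bbq[\G_n,\G_n]\ast(\G_n/[\G_n,\G_n])$ and localizing at $S_n$ produces $\clr_n\cong\bbk_n\ast(\G_n/[\G_n,\G_n])$. By (i) this group is $\langle\mu\rangle\cong\bbz$, so the crossed product is precisely a twisted Laurent polynomial ring $\bbk_n[\mu^{\pm 1}]$, proving (ii). A twisted Laurent polynomial ring over a division ring is a left and right principal ideal domain, via the division algorithm in the $\mu$-degree (nonzero leading coefficients are units), and its Ore quotient field --- which coincides with that of $\bbq\G_n$ and of $\bbz\G_n$, since Ore localization does not change the division ring of fractions --- is $\bbk_n(\mu)\cong\clk_n$. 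This gives (iii).

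It remains to deduce (i) at level $n+1$ from (i) at level $n$, which is the crux. Put $A=\clk_n/\clr_n$, a left $\bbq\G_n$-module (note $\clr_n$ is a left $\G_n$-submodule of $\clk_n$ since $\G_n\subset\clr_n$), so that $\G_{n+1}=A\rtimes\G_n$. The identities $[(a,1),(0,h)]=((1-h)a,1)$ and $[(0,g),(0,h)]=(0,[g,h])$, valid in $A\rtimes\G_n$, identify $[\G_{n+1},\G_{n+1}]$ with $I_{\G_n}A\rtimes[\G_n,\G_n]$, where $I_{\G_n}$ is the augmentation ideal; hence $\G_{n+1}/[\G_{n+1},\G_{n+1}]\cong\bigl(A/I_{\G_n}A\bigr)\times\bigl(\G_n/[\G_n,\G_n]\bigr)$. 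Since $A/I_{\G_n}A=H_0(\G_n;\clk_n/\clr_n)$, and applying $\bbq\otimes_{\bbq\G_n}(-)$ to $0\to\clr_n\to\clk_n\to A\to 0$ gives a right-exact sequence $H_0(\G_n;\clr_n)\to H_0(\G_n;\clk_n)\to H_0(\G_n;A)\to 0$, it suffices to show $H_0(\G_n;\clk_n)=0$. For this, pick $g\in\G_n$ with $g\neq 1$ (e.g. $g=\mu$, of infinite order): then $g-1$ is a nonzero element of the skew field $\clk_n$, hence a unit, so $(g-1)\clk_n=\clk_n$ and thus $I_{\G_n}\clk_n=\clk_n$, i.e. $H_0(\G_n;\clk_n)=0$. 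Therefore $A/I_{\G_n}A=0$, and by (i) at level $n$ we get $\G_{n+1}/[\G_{n+1},\G_{n+1}]\cong\bbz$ generated by $\mu$. This closes the induction.

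I expect the only genuinely delicate ingredient to be the localization step: that $S_n=\bbq[\G_n,\G_n]\setminus\{0\}$ is an Ore set in $\bbq\G_n$ and that localizing is compatible with the crossed-product decomposition, so that $\bbq\G_n=\bbq[\G_n,\G_n]\ast\bbz$ becomes $\clr_n=\bbk_n[\mu^{\pm 1}]$. This is the standard localization property of PTFA group rings underlying the cited construction; granting it, the remainder is formal manipulation of crossed products together with the one-line homological vanishing above.
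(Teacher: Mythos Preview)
The paper does not supply a proof of this proposition; it is quoted from \cite[Corollary~3.3]{cot03} as background in a section where the authors explicitly claim no originality. Your argument is correct: the inductive computation of the abelianization of $\G_{n+1}$ via the semidirect-product commutator identity $[A\rtimes G,\,A\rtimes G]=I_GA\rtimes[G,G]$, together with the vanishing of $H_0(\G_n;\clk_n)$ (since $\mu-1$ is a unit in the skew field $\clk_n$), is the standard route and recovers the cited result. The one step you rightly flag as requiring care --- that $\bbq[\G_n,\G_n]\setminus\{0\}$ is Ore in $\bbq\G_n$ and that localization carries the crossed-product decomposition to $\bbk_n\ast\bbz\cong\bbk_n[\mu^{\pm1}]$ --- is exactly what the PTFA hypothesis provides and is the substance of the construction in \cite{cot03}.
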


We explain how to obtain representations $\phi_n\colon \pi_1(M)\to \G_n$ for a closed 3--manifold $M$ with first Betti number 1. The example for $M$ we should keep in mind is $M_K$, the 0--surgery on a knot $K$ in $S^3$. To this end, we define higher-order Alexander modules and Blanchfield linking forms.

We recall the classical invariants. Let $\phi_0\colon \pi_1(M)\to \G_0 = \la \mu \ra$ be the homomorphism induced by the abelianization. For example, if $M$ is the 0--surgery $M_K$, then we can choose $\phi_0$ to be the abelianization sending a meridian of a knot $K$ to $\mu$.
Then we obtain a coefficient system $\phi_0\colon \pi_1(M)\to \G_0\hookrightarrow\clr_0$, and from this we obtain
$H_1(M;\clr_0)$ as a (right) $\clr_0$--module, which is \emph{the 0-th order Alexander module $\cla_0^M$} of $K$. This Alexander module is the same as the classical rational Alexander module. It is well-known that we have
the Blanchfield form $\bl_0:\cla_0^M\times \cla_0^M\to \clk_0/\clr_0$, which is
nonsignular in the sense that it provides an isomorphism $\cla_0^M\cong
\Hom_{\clr_0}(\cla_0^M,\clk_0/\clr_0)$. Now for a nonnegative integer $n$, suppose that we have a representation $\phi\colon \pi_1(M)\to \G_n$. Then \emph{the $n$-th order Alexander module of $M$ associated to $\phi$}, denoted $\cla_n^{M,\phi}$, is defined to be the first homology $H_1(M;\clr_n)$ with the twisted coefficient system given by $\phi\colon \pi_1(M)\to \G_n \hookrightarrow \clr_n$. We write $\cla_n^{K,\phi}$ for $\cla_n^{M,\phi}$ when $M=M_K$. We suppress superscripts when they are understood from the context.

\begin{thm}\cite[Proposition 2.11, Theorem 2.13]{cot03}\label{thm:Blanchfield form} Let $M$ be a closed 3--manifold with first Betti number 1. If $\phi\colon \pi_1(M)\to \G_n$ is a nontrivial representation, then $\cla_n$ is a (right) $\clr_n$-torsion module, and there is a nonsingular symmetric linking form $\bl_n^{M,\phi}\colon \cla_n\times\cla_n \to \clk_n/\clr_n$.

\end{thm}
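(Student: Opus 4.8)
The plan is to establish the two assertions separately --- that $\cla_n$ is a torsion $\clr_n$--module, and that it carries a nonsingular conjugate-symmetric $\clk_n/\clr_n$--valued pairing --- following the template of the classical Blanchfield pairing and of Cochran--Orr--Teichner \cite{cot03}.

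For the torsion statement: since $\clr_n$ is an Ore domain with skew field of fractions $\clk_n$, the ring $\clk_n$ is flat over $\clr_n$, so $\cla_n=H_1(M;\clr_n)$ is $\clr_n$--torsion if and only if $H_1(M;\clk_n)=0$; in fact I would show that $M$ is acyclic over the skew field $\clk_n$ in positive degrees. The $H_i(M;\clk_n)$ are then finite-dimensional $\clk_n$--vector spaces. First, $H_0(M;\clk_n)=0$: nontriviality of $\phi$ supplies $g\in\pi_1(M)$ with $\phi(g)\ne 1$ in $\G_n$, hence (as $\G_n\hookrightarrow\clr_n$) with $\phi(g)-1$ a nonzero element --- so a unit --- of $\clk_n$, which kills the augmentation coinvariants. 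Poincar\'e duality and the universal coefficient theorem over the skew field $\clk_n$ (no $\operatorname{Ext}$ terms) give $H_3(M;\clk_n)\cong\overline{\Hom_{\clk_n}(H_0(M;\clk_n),\clk_n)}=0$, and $\chi(M)=0$ forces $\dim_{\clk_n}H_1(M;\clk_n)=\dim_{\clk_n}H_2(M;\clk_n)$. The vanishing of this common dimension is the heart of the matter, and it is exactly here that first Betti number $1$ enters: I would pick a connected two-sided surface $Y\subset M$ Poincar\'e dual to a generator of $H^1(M;\bbz)\cong\bbz$, cut $M$ along $Y$ to obtain a compact connected 3--manifold $M_0$ realizing $\pi_1(M)$ as the HNN extension $\pi_1(M_0)\ast_{\pi_1(Y)}$, and run the associated Mayer--Vietoris (Wang) sequence
\[
\cdots\to H_i(Y;\clk_n)\to H_i(M_0;\clk_n)\to H_i(M;\clk_n)\to H_{i-1}(Y;\clk_n)\to\cdots ,
\]
using that $Y$ is a closed orientable surface, so that its $\clk_n$--homology is concentrated in degrees $0,1,2$ with dimensions governed (as computed above) by whether $\phi|_{\pi_1(Y)}$ is trivial, and Poincar\'e--Lefschetz duality for $M_0$ rel boundary to control $H_\ast(M_0;\clk_n)$. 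Chasing this sequence --- the noncommutative analogue of Levine's proof that a closed 3--manifold with $\beta_1=1$ has torsion rational Alexander module --- should force $H_1(M;\clk_n)=H_2(M;\clk_n)=0$.

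For the linking form: by Proposition~\ref{prop:ru} the ring $\clr_n$ is a (noncommutative) principal ideal domain, hence hereditary, with $\clk_n$ its injective hull as a module over itself; therefore $\clk_n/\clr_n$, a quotient of an injective module over a hereditary ring, is an injective $\clr_n$--module, so $\operatorname{Ext}^1_{\clr_n}(-,\clk_n/\clr_n)=0$. From the torsion step and universal coefficients, $H^1(M;\clk_n)=H^2(M;\clk_n)=0$, so in the long exact cohomology sequence of $0\to\clr_n\to\clk_n\to\clk_n/\clr_n\to 0$ the Bockstein $\delta\colon H^1(M;\clk_n/\clr_n)\to H^2(M;\clr_n)$ is an isomorphism. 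Then define $\bl_n^{M,\phi}$ through the composite of isomorphisms
\[
\cla_n=H_1(M;\clr_n)\xrightarrow{\,\cap[M]\,}H^2(M;\clr_n)\xrightarrow{\,\delta^{-1}\,}H^1(M;\clk_n/\clr_n)\xrightarrow{\,\kappa\,}\Hom_{\clr_n}(\cla_n,\clk_n/\clr_n),
\]
where $\cap[M]$ is Poincar\'e duality for the closed oriented 3--manifold $M$ (with coefficients twisted by the involution of $\clr_n$) and $\kappa$ is the universal--coefficient evaluation map, an isomorphism because the obstructing $\operatorname{Ext}^1$ term vanishes by injectivity of $\clk_n/\clr_n$. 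Setting $\bl_n^{M,\phi}(x,y)$ equal to the value at $y$ of the homomorphism corresponding to $x$ (and using the involution to identify left and right $\clr_n$--module structures) yields an $\clr_n$--sesquilinear pairing $\cla_n\times\cla_n\to\clk_n/\clr_n$ which is nonsingular because it is built from isomorphisms. The required symmetry $\bl_n^{M,\phi}(y,x)=\overline{\bl_n^{M,\phi}(x,y)}$ follows, exactly as for the classical Blanchfield form and as in \cite[Theorem 2.13]{cot03}, either from the equivariant-intersection description of the pairing on the $\G_n$--cover of $M$ (using symmetry of the intersection pairing $H_2\times H_1$) or from a formal diagram chase invoking naturality of Poincar\'e duality and of $\delta$ under the involution.

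The main obstacle is the torsion statement, and within it the vanishing $H_1(M;\clk_n)=0$: Euler-characteristic and duality considerations by themselves deliver only $\dim H_1(M;\clk_n)=\dim H_2(M;\clk_n)$, so one genuinely must feed in the hypothesis $\beta_1(M)=1$ via the codimension-one decomposition of $M$ and a somewhat delicate analysis of the Wang sequence in the several cases determined by how $\phi$ restricts to $Y$ and to $M_0$. Everything downstream --- the construction of $\bl_n^{M,\phi}$ and its non-singularity and symmetry --- is then the standard homological-algebra package over the principal ideal domain $\clr_n$.
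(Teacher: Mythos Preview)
The paper does not supply a proof of this theorem at all: it is quoted verbatim from \cite[Proposition~2.11 and Theorem~2.13]{cot03}, and the only content the paper adds is the chain-level description $\bl_n([x],[y])=\tfrac{1}{r}(c\cdot y)$ of the pairing immediately following the statement. So there is no ``paper's own proof'' to compare against; your proposal is effectively a reconstruction of the argument in \cite{cot03}.

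As such a reconstruction, your outline is sound and follows the standard template. The construction of $\bl_n$ as the composite Poincar\'e duality $\circ$ inverse Bockstein $\circ$ evaluation, with nonsingularity coming from injectivity of $\clk_n/\clr_n$ over the PID $\clr_n$, is exactly the mechanism of \cite[Theorem~2.13]{cot03}, and your reduction of the torsion statement to $H_1(M;\clk_n)=0$ via flatness is correct. One remark on the torsion step: the argument in \cite[Proposition~2.11]{cot03} is organized slightly differently from your Wang-sequence sketch --- rather than a case analysis on how $\phi$ restricts to the dual surface $Y$ and to $M_0$, Cochran--Orr--Teichner bound $\rk_{\bbz\G_n}H_1(M;\bbz\G_n)$ directly by $\beta_1(M)-1$ using a comparison with the infinite cyclic cover --- so if you carry out your version you will indeed have to handle the cases where $\phi|_{\pi_1(Y)}$ or $\phi|_{\pi_1(M_0)}$ is trivial, and your phrase ``should force'' currently hides exactly that work. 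That is the only place your sketch is genuinely incomplete; the rest is routine.
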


\noindent The linking form $\bl_n^{M,\phi}$ is \emph{the $n$-th order Blanchfield form on $\cla_n$}. When $n=0$, this is the same as the classical rational Blanchfield form. We write $\bl_n^{K,\phi}$ for $\bl_n^{M,\phi}$ when $M=M_K$, and sometimes suppress superscripts for simplicity. We describe how $\bl_n$ is defined: for $[x],[y]\in \cla_n$, there exists $r\in \clr_n$ such that $xr = \partial c$ for some 2-chain $c\in C_2(M;\clr_n)$. Then $\bl_n([x],[y]) = \frac{1}{r}(c\cdot y)$ where $c\cdot y$ is the equivariant intersection over $\clr_n$.

Now we explain how to obtain a representation $\phi_n\colon \pi_1(M)\to \G_n$ assuming we are given a representation $\phi_{n-1}\colon \pi_1(M)\to \G_{n-1}$. Recall that $\G_n=\clk_{n-1}/\clr_{n-1}\rtimes \G_{n-1}$. We define $\Rep_{\G_{n-1}}^\ast(\pi_1(M),\G_n)$ to be the set of representations $\phi_n\colon \pi_1(M)\to \G_n$ modulo $\clk_{n-1}/\clr_{n-1}$-conjugations such that $p\circ \phi_n = \phi_{n-1}$ where $p\colon \G_n \to \G_{n-1}$ is the canonical projection. In the theorem below, we show how to obtain representations $\phi_n\colon \pi_1(M)\to \G_n$ by inductively choosing elements from Alexander modules.

\begin{thm}\cite[Theorem 3.5]{cot03}\label{thm:extension} Let $M$ be a closed 3--manifold with first Betti number 1. Let $\phi_0\colon \pi_1(M)\to \G_0$ be the abelianization. Suppose that $n\ge 1$ and we have constructed a representation $\phi_{n-1}:\pi_1(M)\to\G_{n-1}$, the $(n-1)$-st order Alexander module $\cla_{n-1}$ associated to $\phi_{n-1}$, and the $(n-1)$-st order Blanchfield form $\bl_{n-1}\colon \cla_{n-1}\times \cla_{n-1}\to \clk_{n-1}/\clr_{n-1}$.  Then the following hold.
\begin{enumerate}
\item There is a bijection $f\colon H^1(M;\clk_{n-1}/\clr_{n-1})\longleftrightarrow \Rep_{\G_{n-1}}^\ast(\pi_1(M),\G_n)$ which is natural with respect to continuous maps.
\item There is an isomorphism $\cla_{n-1} \cong H^1(M;\clk_{n-1}/\clr_{n-1})$ with $f$ giving a natural bijection $\tilde{f}\colon \cla_{n-1} \longleftrightarrow \Rep_{\G_{n-1}}^\ast(\pi_1(M),\G_n)$. In particular, each choice of an element $x\in \cla_{n-1}$ induces a representation $\phi_n\colon \pi_1(M)\to \G_n$ such that $p\circ \phi_n = \phi_{n-1}$.  This $\phi_n$ will also be denoted by $\phi_{n-1,x}$ to indicate its dependency of $x$. 
\item If $x\in \cla_{n-1}$, then the image of $x$ under the map
$$
\cla_{n-1}\xrightarrow{\cong} H^1(M;\clk_{n-1}/\clr_{n-1})\to \Hom_{\clr_{n-1}}(\cla_{n-1},\clk_{n-1}/\clr_{n-1})
$$
is the map given by $y\mapsto \bl_{n-1}(x,y)$.
In particular, for $\phi_n=\phi_{n-1,x}$ and $y\in \pi_1(M)$ with $\phi_{n-1}(y) = 0$ (hence $y$ can be considered as an element in $\cla_{n-1}$), if $\bl_{n-1}(x, y) = 0$, then $\phi_n(y) = 0.$
\end{enumerate}
\end{thm}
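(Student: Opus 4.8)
The plan is to reduce everything to homological algebra over the (skew) principal ideal domain $\clr_{n-1}$ of Proposition~\ref{prop:ru}, exploiting the semidirect--product structure $\G_n=\clk_{n-1}/\clr_{n-1}\rtimes\G_{n-1}$ of Definition~\ref{def:ru groups}. For part~(1): a homomorphism $\phi_n\colon\pi_1(M)\to\G_n$ with $p\circ\phi_n=\phi_{n-1}$ necessarily has the form $\gamma\mapsto(z(\gamma),\phi_{n-1}(\gamma))$ for a function $z\colon\pi_1(M)\to\clk_{n-1}/\clr_{n-1}$, and the multiplication rule in the semidirect product turns the homomorphism condition into the $1$--cocycle identity $z(\gamma_1\gamma_2)=z(\gamma_1)+\phi_{n-1}(\gamma_1)\cdot z(\gamma_2)$, where $\clk_{n-1}/\clr_{n-1}$ is regarded as a $\pi_1(M)$--module via $\phi_{n-1}$ and the $\clr_{n-1}$--module structure on $\clk_{n-1}/\clr_{n-1}$. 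A short computation shows that conjugating $\phi_n$ by an element of $\clk_{n-1}/\clr_{n-1}$ changes $z$ by the coboundary of a $0$--cochain. Hence $\Rep_{\G_{n-1}}^\ast(\pi_1(M),\G_n)$ is in bijection with $H^1(\pi_1(M);\clk_{n-1}/\clr_{n-1})$, and since first cohomology depends only on $\pi_1$, with $H^1(M;\clk_{n-1}/\clr_{n-1})$; naturality with respect to continuous maps is automatic from functoriality of pullback of cocycles. This produces $f$.

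For part~(2) I would identify $H^1(M;\clk_{n-1}/\clr_{n-1})$ with $\cla_{n-1}$ through Poincar\'e duality and a Bockstein. From the coefficient sequence $0\to\clr_{n-1}\to\clk_{n-1}\to\clk_{n-1}/\clr_{n-1}\to 0$ one gets a long exact homology sequence; since $\cla_{n-1}=H_1(M;\clr_{n-1})$ is $\clr_{n-1}$--torsion (Theorem~\ref{thm:Blanchfield form}) and $\clk_{n-1}$ is flat over $\clr_{n-1}$ (it is an Ore localization), one checks that $H_*(M;\clk_{n-1})=0$ in every degree---for $\ast=0$ using that $\phi_{n-1}$ is nontrivial, for $\ast=3$ that the $\phi_{n-1}$--cover of $M$ is noncompact---so the connecting map gives $H_2(M;\clk_{n-1}/\clr_{n-1})\cong H_1(M;\clr_{n-1})=\cla_{n-1}$. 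Poincar\'e duality for the closed oriented $3$--manifold $M$ identifies $H^1(M;\clk_{n-1}/\clr_{n-1})$ with $H_2(M;\clk_{n-1}/\clr_{n-1})$, and composing the two isomorphisms yields $\cla_{n-1}\cong H^1(M;\clk_{n-1}/\clr_{n-1})$. Combining this with $f$ gives the natural bijection $\tilde f$, and for each $x\in\cla_{n-1}$ the representation $\phi_n=\phi_{n-1,x}$ with $p\circ\phi_n=\phi_{n-1}$.

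For part~(3): the composite $\cla_{n-1}\xrightarrow{\cong}H^1(M;\clk_{n-1}/\clr_{n-1})\to\Hom_{\clr_{n-1}}(\cla_{n-1},\clk_{n-1}/\clr_{n-1})$---the second arrow being the Kronecker evaluation from the universal coefficient theorem over the PID $\clr_{n-1}$---is, by construction, the adjoint of the Blanchfield form $\bl_{n-1}$, because $\bl_{n-1}$ in Theorem~\ref{thm:Blanchfield form} is defined by exactly the same chain of Poincar\'e duality and Bockstein maps; so identifying the two is a diagram chase, not a new computation. The final assertion then follows by restricting the cocycle $z_x$ representing $x$ to $\ker\phi_{n-1}$: there the $\phi_{n-1}$--action on $\clk_{n-1}/\clr_{n-1}$ is trivial, so $z_x|_{\ker\phi_{n-1}}$ is a homomorphism into an abelian group, it factors through the abelianization of $\ker\phi_{n-1}$ and hence through the natural map $\ker\phi_{n-1}\to\cla_{n-1}$, and its value at $y$ equals $\bl_{n-1}(x,y)$ by the identification just made; thus $\bl_{n-1}(x,y)=0$ forces $\phi_n(y)=(z_x(y),\phi_{n-1}(y))=(0,0)=0$.

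The main obstacle is the bookkeeping inherent to the noncommutative setting: one must track carefully the involution used to pass between left and right $\clr_{n-1}$--modules in Poincar\'e duality and in the evaluation map, so that the isomorphism $\cla_{n-1}\cong H^1(M;\clk_{n-1}/\clr_{n-1})$ matches on the nose the definition of $\bl_{n-1}$---this compatibility is precisely what makes part~(3) work. A secondary point needing care is the vanishing $H_*(M;\clk_{n-1})=0$ in all degrees, on which the connecting--homomorphism isomorphism in part~(2) rests.
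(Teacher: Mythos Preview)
The paper does not supply its own proof of this theorem: it is quoted verbatim as \cite[Theorem~3.5]{cot03} and used as a black box, so there is no argument in the paper against which to compare your proposal.

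That said, your sketch is the standard route and is essentially what one finds in \cite{cot03}. The derivation/cocycle description of lifts into a semidirect product gives part~(1); the Bockstein for $0\to\clr_{n-1}\to\clk_{n-1}\to\clk_{n-1}/\clr_{n-1}\to 0$ together with Poincar\'e duality and the vanishing $H_\ast(M;\clk_{n-1})=0$ gives part~(2); and part~(3) is indeed tautological once one observes that $\bl_{n-1}$ is \emph{defined} by the same composite of Poincar\'e duality, Bockstein, and Kronecker evaluation. Your caveats about the involution bookkeeping and about establishing $H_\ast(M;\clk_{n-1})=0$ are the right places to be careful, but neither presents a genuine obstacle for a closed oriented $3$--manifold with $b_1=1$ and a nontrivial PTFA representation.
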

For the representation $\phi_n=\phi_{n-1,x}$ constructed as in Theorem~\ref{thm:extension}(2), we say that $\phi_n$ is \emph{associated to $\phi_{n-1}$ and $x$}. Theorem~\ref{thm:extension}(2) implies that for $0\le i\le n-1$, by inductively choosing $x_i\in \cla_i=\cla_i(x_0,x_1,\ldots,x_{i-1})$, we can obtain a representation $\phi_n\colon \pi_1(M)\to \G_n$, which is denoted by $\phi_{0,x_0,x_1,\ldots, x_{n-1}}$, or more simply, $\phi_{x_0,x_1,\ldots, x_{n-1}}$. In this case, we also say that $\phi_n$, $\cla_n^{\phi_n}$, and $\bl_n^{\phi_n}$ are \emph{associated to $x_0,x_1,\ldots, x_{n-1}$}.

For a later use, we consider the case that $\phi_n$ is obtained by taking $\phi_n = i\circ \phi_{n-1}$ where $i\colon \G_{n-1}\hookrightarrow \G_n=\clk_{n-1}/\clr_{n-1}\rtimes \G_{n-1}$ is the canonical injection to the right summand, that is, $\phi_n$ is associated to $\phi_{n-1}$ and $0\in \cla_{n-1}$. In this case, the coefficient system $\phi_n\colon \pi_1(M)\to \clr_n$ factors through $\phi_{n-1}\colon \pi_1(M)\to \clr_{n-1}$ and we have the following theorem.

\begin{thm}\cite[Theorem 5.16]{cha06}\label{thm:factoring through}
Let $n$ be a positive integer. If $\phi_n\colon \pi_1(M)\to \G_n$ factors through $\phi_{n-1}\colon \pi_1(M)\to \G_{n-1}$ as above, then the following hold.
    \begin{enumerate}
    \item $\cla_n = \cla_{n-1}\otimes_{\clr_{n-1}} \clr_n$ as (right) $\clr_n$-modules.
    \item For the canonical homomorphism $i\colon \clk_{n-1}/\clr_{n-1}\to \clk_n/\clr_n$ and $x,y\in \cla_{n-1}$ and $a,b\in \clr_n$,
    $$
    \bl_n(x\otimes a, y\otimes b) = a\cdot i(\bl_{n-1}(x,y))\cdot \bar{b}.
    $$
    \end{enumerate}
\end{thm}

Now if $\phi_n\colon \pi_1(M)\to \G_n$ factors through $\phi_0\colon \pi_1(M)\to \G_0$, then by applying Theorem~\ref{thm:factoring through} repeatedly we obtain that $\cla_n=\cla_0\otimes_{\clr_0} \clr_n$ and $\bl_n(x\otimes a, y\otimes b) = a\cdot i(\bl_0(x,y))\cdot \bar{b}$ where $i\colon \clk_0/\clr_0\to \clk_n/\clr_n$ is the canonical homomorphism.
If $\phi_n\colon \pi_1(M)\to \G_n$ factors through $\phi_0\colon \pi_1(M)\to \G_0$, then we write $\bl_n^{M, \phi_0}$ for $\bl_n^{M,\phi_n}$.

\subsection{Obstructions to a knot being $(n.5)$--solvable}\label{subsec:obstructions}
Let $M$ be a closed $3$--manifold, and let $\phi\colon \pi_1(M)\to \G$ be a representation where $\G$ is a countable discrete group. Then one can define the von Neumann $\rho$--invariant $\rho(M,\phi)\in \bbr$ of $M$ associated to $\phi$ (see \cite{chg85}). If $\phi$ extends to $\bar{\phi}\colon \pi_1(W)\to \Gamma$ for a compact oriented $4$--manifold $W$ with $\partial W=M$, then the invariant $\rho(M,\phi)$ can be computed as $\rho(M,\phi) = \sigma^{(2)}(W, \bar{\phi}) - \sigma_0(W)$ where $\sigma^{(2)}(W,\bar{\phi})$ denotes the $L^{(2)}$--signature of the intersection form on $H_2(W;\bbz\Gamma)$ and $\sigma_0(W)$ the ordinary signature of $W$. (The existence of such $\bar{\phi}$ and $W$ can be assumed for the computation of $\rho(M,\phi)$ by Theorem~\ref{thm:rho invaiants}(2) below and Weinberger's work.) For a knot $K$ and a representation $\phi\colon \pi_1(M_K) \to \G$, we write $\rho(K,\phi)$ for $\rho(M_K,\phi)$. Recall that $(n)$--solvable 3--manifolds (or knots) are rationally $(n)$--solvable.

\begin{thm}\cite[Theorem 4.2]{cot03}\cite[Section 2]{cot04}\label{thm:rho invaiants} Let $M$ be a closed 3--manifold and $\G$ a group.
    \begin{enumerate}
    \item Let $\G$ be an $(n)$--solvable PTFA group. If $M$ is (rationally) $(n.5)$--solvable via $W$ to which the coefficient system $\phi\colon \pi_1(M) \to \G$ extends, then $\rho(M,\phi) = 0$.
    \item (Subgroup property) If $\phi\colon \pi_1(M)\to \G$ factors through $\phi'\colon \pi_1(M)\to \G'\subset \G$, then $\rho(M,\phi) = \rho(M,\phi')$.
    \item For a knot $K$, if $\phi\colon \pi_1(M_K)\to \bbz$ is a nontrivial homomorphism, then $\rho(K,\phi)$ equals the integral of the Levine-Tristram signature function $\sigma_\omega(K)$ of the knot $K$, integrated over the circle normalized to length one. In addition, if $K$ is algebraically slice, then $\rho(K,\phi)=0$.
    \item If $\phi\colon \pi_1(M)\to \G$ is a trivial homomorphism, then $\rho(M,\phi)=0$.
    \end{enumerate}
\end{thm}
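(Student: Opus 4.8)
\textbf{Proof proposal for Theorem~\ref{thm:rho invaiants}.}

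The plan is to treat the four parts in the order (4), (3), (2), (1), since the later parts lean on the earlier ones. Part~(4) is immediate: if $\phi$ is trivial, then the twisted chain complex $C_*(M;\bbz\G)$ is just $C_*(M;\bbz)\otimes_\bbz \bbz\G$, and since $M$ is a closed oriented $3$--manifold there is no middle-dimensional intersection form contributing to $\rho$; more concretely one takes $W=M\times[0,1]$ (or any nullbordism after stabilization) and observes that the $L^{(2)}$--signature equals the ordinary signature, so $\rho(M,\phi)=\sigma^{(2)}(W,\bar\phi)-\sigma_0(W)=0$. For part~(3), I would invoke the identification of $\rho(K,\phi)$ with the integral of the Levine--Tristram signature function, which is the content of \cite[Lemma~5.4 and its proof]{cot03} (via the fact that for $\phi\colon\pi_1(M_K)\to\bbz$ the von Neumann signature of a bounding $4$--manifold reduces, through the Atiyah--Singer $G$--signature theorem applied to the infinite cyclic cover, to $\int_{S^1}\sigma_\omega(K)\,d\omega$); the algebraic sliceness statement then follows because a metabolic Seifert form forces $\sigma_\omega(K)=0$ for all $\omega$ on the unit circle except possibly at finitely many roots of the Alexander polynomial, hence the integral vanishes.

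For part~(2), the subgroup property, the key point is functoriality of the $\rho$--invariant under the inclusion $\G'\hookrightarrow\G$. If $\phi=j\circ\phi'$ with $j\colon\G'\hookrightarrow\G$, then I would build a $4$--manifold $W$ with $\partial W=M$ over which $\phi'$ extends (possible after the standard bordism argument of \cite{cot03}), and note that $H_2(W;\bbz\G)\cong H_2(W;\bbz\G')\otimes_{\bbz\G'}\bbz\G$ while $\bbz\G$ is a free (hence faithfully flat, up to $L^{(2)}$--considerations) $\bbz\G'$--module; the induced intersection form on $H_2(W;\bbz\G)$ is obtained from that on $H_2(W;\bbz\G')$ by induction, and the $L^{(2)}$--signature of an induced form over a group von Neumann algebra equals the $L^{(2)}$--signature of the original form (this is the standard invariance of $L^{(2)}$--signatures under induction along injections of groups). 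Subtracting $\sigma_0(W)$, which does not see the coefficient system, gives $\rho(M,\phi)=\rho(M,\phi')$.

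Part~(1) is where the real work lies and where I expect the main obstacle. Assume $\G$ is an $(n)$--solvable PTFA group and that $M$ is rationally $(n.5)$--solvable via $W$ with $\phi$ extending to $\bar\phi\colon\pi_1(W)\to\G$. Since $\G$ is PTFA, $\bbq\G$ (and $\bbz\G$) embeds in its Ore skew field $\clk$, so $L^{(2)}$--signatures computed over $\bbz\G$ agree with Ore-localized signatures and behave additively and with good vanishing properties. The strategy is to show the $L^{(2)}$--signature $\sigma^{(2)}(W,\bar\phi)$ equals the ordinary signature $\sigma_0(W)$ by exhibiting a ``half-rank'' metabolizer for the $\bbz\G$--intersection form coming from the $(n.5)$--solution data. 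Concretely: the $(n)$--solvability surfaces $x_i,y_j\in H_2(W;\bbz[\pi/\pi^{(n)}])$ with $\lambda_n(x_i,x_j)=0$, $\lambda_n(x_i,y_j)=\delta_{ij}$ already split $H_2$ over $\bbz[\pi/\pi^{(n)}]$ as a hyperbolic form, and the extra $(n.5)$ condition provides lifts $\wti{x_i}\in H_2(W;\bbz[\pi/\pi^{(n+1)}])$ with $\lambda_{n+1}(\wti{x_i},\wti{x_j})=0$. Because $\G^{(n+1)}=0$, the representation $\bar\phi$ factors through $\pi/\pi^{(n+1)}$, so these $\wti{x_i}$ push forward to elements of $H_2(W;\bbz\G)$ spanning a submodule $L$ on which the $\bbz\G$--intersection form vanishes and which is of half rank (rationally) in $H_2(W;\clk)$; that is, $L\otimes\clk$ is a Lagrangian. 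A form over $\clk$ admitting a Lagrangian has vanishing signature after tensoring up to the von Neumann algebra, hence $\sigma^{(2)}(W,\bar\phi)=\sigma_0(W)$ and $\rho(M,\phi)=0$. The delicate points — and the crux of the proof — are (i) verifying that the $\wti{x_i}$ genuinely generate a half-rank submodule over $\clk$, which requires controlling how $H_2(W;\bbz\G)$ relates to $H_2(W;\bbz[\pi/\pi^{(n)}])$ via a Bockstein-type or coefficient-change spectral sequence argument and using that $W$ is rationally $(n.5)$--solvable rather than integrally, and (ii) the self-intersection conditions $\mu_{n+1}(\wti{x_i})=0$, needed so that $L$ is actually isotropic (not merely ``isotropic mod $2$-torsion'') for the nonsingular part of the form. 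I would follow the localization argument of \cite[Section~4]{cot03}: localize the intersection form at $\clk$, use that $\clk$ is a (skew) field so that the form decomposes as nonsingular $\oplus$ trivial, check the Lagrangian survives localization, and conclude by the vanishing of the $L^{(2)}$--signature of a hyperbolic form over a group von Neumann algebra (the ``$L^{(2)}$--signature is a homomorphism out of the Witt group and kills metabolic forms'' principle).
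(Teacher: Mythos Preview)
The paper does not give a proof of Theorem~\ref{thm:rho invaiants}; it is stated purely as background, with citations to \cite[Theorem~4.2]{cot03} and \cite[Section~2]{cot04}, and no argument is supplied. There is therefore nothing in the paper to compare your proposal against.

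That said, your sketch is a reasonable outline of the arguments in the cited references, and for part~(1) in particular it tracks the strategy of \cite[Section~4]{cot03} rather closely. Two small corrections: in part~(4), the choice $W=M\times[0,1]$ does not have $\partial W=M$; the clean argument is simply that for trivial $\phi$ the $L^{(2)}$--signature of any bounding $4$--manifold agrees with its ordinary signature, so the defect vanishes (or invoke part~(2) with $\G'=\{e\}$). In part~(1), the delicate point you flag---that the $\wti{x_i}$ span a half-rank subspace over $\clk$---is resolved in \cite{cot03} by combining the inequality $\dim_{\clk}H_2(W;\clk)\le \dim_{\bbq}H_2(W;\bbq)=2r$ (a chain-level consequence of $\bbq\G$ embedding in the skew field $\clk$) with the duality $\lambda_n(x_i,y_j)=\delta_{ij}$, which forces the images of the $\wti{x_i}$ to remain linearly independent after the coefficient change.
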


Cochran, Orr and Teichner investigated in which cases representations to $\G_n$ extend to $(n.5)$--solutions, and obtained their obstructions to a knot being $(n.5)$--solvable, which are given below. Recall that an $\clr_n$--submodule $P$ of $\cla_n$ is said to be \emph{self-annihilating with respect to $\bl_n$} if $P=P^\perp$ where \[
P^\perp :=\{x\in \cla_n\mid \bl_n(x,y)=0 \mbox{ for all } y\in \cla_n\}.
\]
Definition~\ref{def:vanishing} and Theorem~\ref{thm:4.6} below can be found in \cite[Theorem 4.6]{cot03}.

\begin{defn}\label{def:vanishing}
Let $n$ be a positive integer. A knot $K$ has \emph{vanishing $\rho$--invariants of order $n$} if it satisfies the following conditions $(1)$--$(n)$.

\begin{itemize}
\item[($1$)] There exists a self-annihilating submodule $P_0$ of $\cla_0$ such that for each  $x_0\in P_0$ and the corresponding representation $\phi_1=\phi_{0,x_0}\colon \pi_1(M_K)\to \G_1$, the invariant $\rho(K,\phi_1)=0$.

\item[($2$)] For each $x_0\in P_0$, there exists a self-annihilating submodule $P_1=P_1(x_0)$ of $\cla_1 = \cla_1(x_0)$ such that for each $x_1\in P_1$ and the corresponding representation $\phi_2=\phi_{1,x_1}\colon \pi_1(M_K)\to \G_2$, the invariant $\rho(K,\phi_2)=0$.

\item[($3$)] For each $x_1\in P_1=P_1(x_0)$ with $x_0\in P_0$, there exists a self-annihilating submodule $P_2=P_2(x_0,x_1)$ of $\cla_2 = \cla_2(x_0,x_1)$ such that for each $x_2\in P_2$ and the corresponding representation $\phi_3=\phi_{2,x_2}\colon \pi_1(M_K)\to \G_3$, the invariant $\rho(K,\phi_3)=0$.

\centerline{\vdots}

\item[($n$)] For each $x_{n-2}\in P_{n-2}=P_{n-2}(x_0,x_1,\ldots, x_{n-3})$ with $x_i\in P_i$, $0\le i\le n-3$, there exists a self-annihilating submodule $P_{n-1} = P_{n-1}(x_0,x_1,\ldots, x_{n-2})$ of $\cla_{n-1} = \cla_{n-1}(x_0,x_1,\ldots, x_{n-2})$ such that for each $x_{n-1}\in P_{n-1}$ and the corresponding representation $\phi_n=\phi_{n-1,x_{n-1}}\colon \pi_1(M_K)\to \G_n$, the invariant $\rho(K,\phi_n)=0$.
\end{itemize}

\end{defn}

Below we state the obstruction to a knot being $(n.5)$--solvable.
\begin{thm}\cite[Theorem 4.6]{cot03}\label{thm:4.6}
Let $n$ be a positive integer. If a knot $K$ is (rationally) $(n.5)$--solvable, then $K$ has vanishing $\rho$--invariants of order $n$.
\end{thm}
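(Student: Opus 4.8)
The plan is to fix a (rational) $(n.5)$--solution $W$ for $K$, put $M=M_K=\partial W$, and \emph{build} the nested self-annihilating submodules $P_0,P_1,\dots,P_{n-1}$ required by Definition~\ref{def:vanishing} directly out of $W$, by induction on the level $k=0,1,\dots,n-1$. The inductive hypothesis at stage $k$ is that, for a previously chosen sequence $x_i\in P_i$ $(0\le i\le k-1)$, the associated representation $\phi_k=\phi_{x_0,\dots,x_{k-1}}\colon\pi_1(M)\to\G_k$ extends to a representation $\bar\phi_k\colon\pi_1(W)\to\G_k$; for $k=0$ this is immediate from Definition~\ref{def:solvability}(1), since $\phi_0$ is the abelianization and $H_1(M)\to H_1(W)$ is an isomorphism. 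Using $\bar\phi_k$ as coefficient system I would form $\cla_k^{W}:=H_1(W;\clr_k)$ and define
\[
P_k:=\ker\bigl(\cla_k^{M}\longrightarrow\cla_k^{W}\bigr),
\]
the map being induced by $M\hookrightarrow W$. Since $\cla_k^{M}=\cla_k^{M}(x_0,\dots,x_{k-1})$ genuinely depends on the earlier choices, so does $P_k$, and this is exactly the ``for each $\dots$ there exists $\dots$'' shape of Definition~\ref{def:vanishing}. At each stage two things must be verified: (i) $P_k=P_k^{\perp}$ with respect to $\bl_k$, and (ii) for every $x_k\in P_k$ the representation $\phi_{k+1}=\phi_{k,x_k}$ of Theorem~\ref{thm:extension}(2) again extends over $W$, so the induction continues and, along the way, $\rho(K,\phi_{k+1})=0$. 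Together, (i) and (ii) for $k=0,\dots,n-1$ yield precisely conditions $(1)$--$(n)$ of Definition~\ref{def:vanishing}.

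Granting (i), step (ii) is the soft part. Once $\phi_{k+1}$ is known to extend over $W$, its $\rho$--invariant vanishes: $\G_{k+1}$ is PTFA with $(\G_{k+1})^{(k+2)}=\{e\}$, hence $(k+1)$--solvable, and $W$ is a rational $(n.5)$--solution with $k+1\le n$, hence also a rational $(k+1.5)$--solution; so Theorem~\ref{thm:rho invaiants}(1) gives $\rho(K,\phi_{k+1})=0$. For the extension statement itself, Theorem~\ref{thm:extension}(1)--(3) identifies the representations lifting $\phi_k$ with the classes in $H^1(-\,;\clk_k/\clr_k)$, the lift $\phi_{k,x_k}$ corresponding to the image of $x_k$ under the isomorphism $\cla_k^{M}\cong H^1(M;\clk_k/\clr_k)$; by the naturality of $f$, the representation $\phi_{k,x_k}$ extends over $W$ exactly when that class lies in $\im\bigl(H^1(W;\clk_k/\clr_k)\to H^1(M;\clk_k/\clr_k)\bigr)$. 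Running the long exact sequence of the pair $(W,M)$ with $\clk_k/\clr_k$ coefficients through Poincar\'e--Lefschetz duality and universal coefficients --- legitimate since $\clr_k$ is a (skew) principal ideal domain and $\cla_k$ is $\clr_k$--torsion (Proposition~\ref{prop:ru}, Theorem~\ref{thm:Blanchfield form}) --- one identifies that image, under $\cla_k^{M}\cong H^1(M;\clk_k/\clr_k)$, with precisely $\ker(\cla_k^{M}\to\cla_k^{W})=P_k$. Hence (ii) follows from the very definition of $P_k$.

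The real content, and the step I expect to be the main obstacle, is (i): the equality $P_k=P_k^{\perp}$. The inclusion $P_k\subseteq P_k^{\perp}$ (isotropy) should come from comparing the equivariant $\clr_k$--intersection pairing on $W$ with $\bl_k$ on $M=\partial W$, via the duality $H_i(W;\clr_k)\cong H^{4-i}(W,M;\clr_k)$ and the recipe for $\bl_k$ recalled after Theorem~\ref{thm:Blanchfield form} (if $xr=\partial c$ then $\bl_k([x],[y])=\frac1r(c\cdot y)$): an element of $P_k$ bounds a $2$--chain in $W$, and the resulting equivariant $\clr_k$--intersections in $W$ are controlled by the solution data $\lambda_k,\mu_k$ of Definition~\ref{def:solvability}, which vanish on a half-rank family. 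The reverse inclusion $P_k^{\perp}\subseteq P_k$ --- the ``Lagrangian upgrade'' --- is where the half-integer hypothesis is essential: it is forced by the extra classes $\wti{x_i}$ over $\bbz[\pi/\pi^{(k+2)}]$ of Definition~\ref{def:solvability}(3) together with a rank count, to the effect that the isotropic $P_k$ is already as large as the nonsingularity of $\bl_k$ on $\cla_k^{M}$ permits. Carrying out this duality-and-rank bookkeeping carefully over the noncommutative localized principal ideal domain $\clr_k$ --- tracking torsion submodules, the involution $a\mapsto\bar a$, and the compatibility of all the identifications as $k$ increases --- is the technical heart of the argument; it is exactly the content of \cite[Theorem~4.4 and Theorem~4.6]{cot03}, whose route I would follow.
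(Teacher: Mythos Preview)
The paper does not give its own proof of this statement: Theorem~\ref{thm:4.6} is simply quoted from \cite[Theorem~4.6]{cot03} with no argument supplied, so there is nothing in the present paper to compare your proposal against. Your outline is an accurate sketch of the original Cochran--Orr--Teichner proof --- fixing a solution $W$, setting $P_k=\ker(\cla_k^M\to\cla_k^W)$, using naturality of the bijection in Theorem~\ref{thm:extension} to propagate extensions, invoking Theorem~\ref{thm:rho invaiants}(1) for the vanishing, and identifying the Lagrangian step $P_k=P_k^{\perp}$ as the substantive input (their Theorem~4.4) --- and you correctly flag that last step as the technical heart requiring the half-integer hypothesis.
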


\subsection{Vanishing system of submodules}
\label{subsec:vanishing}

We give another description of the condition of having vanishing $\rho$--invariants of order $n$.  For a knot $K$ and a submodule $P$ of $\cla_i^{K,\phi_i}$, we say that $P$ is \emph{associated to $\phi_i$}.

\begin{defn}\label{def:vanishing 2}
Let $n$ be a positive integer. For a knot $K$, let $\bbs_n$ be a nonempty set of $n$-tuples $(P_0, P_1, \ldots, P_{n-1})$ where each $P_i$ is a self-annihilating submodule of $\cla_i^{K,\phi_i}$ associated to a representation $\phi_i\colon \pi_1(M_K)\to \G_i$ for $0\le i\le n-1$. The set $\bbs_n$ is a \emph{vanishing system of submodules of order $n$ for $K$} if for each $(P_0, P_1, \ldots, P_{n-1})\in \bbs_n$ the following hold.
\begin{enumerate}
	\item For each $i$ such that $1\le i\le n-1$, the representation $\phi_i$ is associated to $\phi_{i-1}$ and some $x_{i-1}\in P_{i-1}$.
	\item If $x\in P_i$ for some $i\le n-2$, then there exists $(P_0',P_1',\ldots, P_{n-1}')\in \bbs_n$ with $P_j'$ associated to $\phi_j'\colon \pi_1(M_K)\to \G_j$ for $0\le j\le n-1$ such that the following hold:
	\begin{enumerate}	
		\item	for $0\le j\le i$, we have $\phi_j'=\phi_j$ and $P_j'=P_j$, 
		\item the representation $\phi_{i+1}'$ is associated to $\phi_i$ and $x$.
	\end{enumerate}
	\item For $0\le i\le n-1$, the invariant $\rho(K,\phi_i) = 0$. Furthermore, for each $x_{n-1}\in P_{n-1}$ and $\phi_n=\phi_{n-1,x_{n-1}}\colon \pi_1(M_K)\to \G_n$, the invariant $\rho(K,\phi_n)=0$.
\end{enumerate}
\end{defn}

\noindent It is easy to see the following proposition, of which proof is omitted.
\begin{prop}\label{prop:vanishing}
Let $n$ be a positive integer. A knot $K$ has vanishing $\rho$--invariants of order $n$ if and only if it has a vanishing system of submodules of order $n$.
\end{prop}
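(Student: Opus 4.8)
The plan is to read Proposition~\ref{prop:vanishing} as a dictionary between the recursively quantified condition of Definition~\ref{def:vanishing} and the ``closed family of admissible towers'' reformulation of Definition~\ref{def:vanishing 2}, and to pass back and forth by bookkeeping on choice sequences $x_0,\dots,x_{k-1}$. Throughout I write $\phi_{x_0,\dots,x_{k-1}}\colon\pi_1(M_K)\to\G_k$ for the representation determined by such a tuple via Theorem~\ref{thm:extension}(2), so that $\phi_{x_0,\dots,x_{k-1}}=(\phi_{x_0,\dots,x_{k-2}})_{x_{k-1}}$, with $\phi_0$ the abelianization; I use repeatedly that $0$ lies in every self-annihilating submodule, and that by the discussion preceding Theorem~\ref{thm:factoring through} the representation $\phi_{0,0}$ factors through $\phi_0$.

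\emph{From Definition~\ref{def:vanishing} to Definition~\ref{def:vanishing 2}.} Given $P_0$ and the family $\{P_i(x_0,\dots,x_{i-1})\}$ furnished by Definition~\ref{def:vanishing}, I take $\bbs_n$ to be the set of all $n$-tuples $\big(P_0,\ P_1(x_0),\ \dots,\ P_{n-1}(x_0,\dots,x_{n-2})\big)$ over valid choice sequences $(x_0,\dots,x_{n-2})$, with the $i$-th entry carrying the representation $\phi_{x_0,\dots,x_{i-1}}$. This is nonempty (take all $x_i=0$), and condition~(1) of Definition~\ref{def:vanishing 2} holds by construction. For condition~(2), given $x$ in the $i$-th entry of such a tuple with $i\le n-2$, the choice sequence $(x_0,\dots,x_{i-1},x,0,\dots,0)$ yields a member of $\bbs_n$ that agrees with the original in entries $0,\dots,i$ (each entry $j$ depends only on $x_0,\dots,x_{j-1}$) and carries $\phi_{x_0,\dots,x_{i-1},x}$ in entry $i+1$, as required. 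For condition~(3): vanishing of $\rho(K,\phi_{x_0,\dots,x_{i-1}})$ for $1\le i\le n-1$ is precisely condition~$(i)$ of Definition~\ref{def:vanishing}, the final clause is condition~$(n)$, and $\rho(K,\phi_0)=0$ because $\rho(K,\phi_{0,0})=0$ (condition~$(1)$ of Definition~\ref{def:vanishing}, using $0\in P_0$) while $\phi_{0,0}$ factors through $\phi_0$, so Theorem~\ref{thm:rho invaiants}(2) gives $\rho(K,\phi_0)=\rho(K,\phi_{0,0})$.

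\emph{From Definition~\ref{def:vanishing 2} to Definition~\ref{def:vanishing}.} Fix a tuple of $\bbs_n$ and let $P_0$ be its first entry; its associated representation is necessarily a nontrivial homomorphism to $\G_0=\bbz$, hence the abelianization up to sign, so $P_0$ is a self-annihilating submodule of $\cla_0$. I then construct, by induction on $k$, for each valid choice sequence $(x_0,\dots,x_{k-1})$ a self-annihilating submodule $P_k(x_0,\dots,x_{k-1})\subset\cla_k^{K,\phi_{x_0,\dots,x_{k-1}}}$ together with a distinguished tuple $T(x_0,\dots,x_{k-1})\in\bbs_n$ whose first $k+1$ entries are $P_0,P_1(x_0),\dots,P_k(x_0,\dots,x_{k-1})$ with representations $\phi_0,\phi_{x_0},\dots,\phi_{x_0,\dots,x_{k-1}}$. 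The inductive step applies condition~(2) of Definition~\ref{def:vanishing 2} to $T(x_0,\dots,x_{k-1})$ and to the given $x_k$ in its $k$-th entry: the output tuple agrees with $T(x_0,\dots,x_{k-1})$ through entry $k$, its $(k+1)$-st representation is $\phi_{x_0,\dots,x_k}$, and its $(k+1)$-st entry (self-annihilating by condition~(1) of Definition~\ref{def:vanishing 2}) I take as $P_{k+1}(x_0,\dots,x_k)$ and the tuple itself as $T(x_0,\dots,x_k)$. Finally $\rho(K,\phi_{x_0,\dots,x_{m-1}})=0$ for $1\le m\le n-1$ follows from condition~(3) of Definition~\ref{def:vanishing 2} applied to $T(x_0,\dots,x_{m-1})$, and the case $m=n$ from the final clause of condition~(3) applied to $T(x_0,\dots,x_{n-2})$; together with the submodules just built this is exactly conditions $(1)$--$(n)$ of Definition~\ref{def:vanishing}. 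When $n=1$ the induction is vacuous and the claim reduces to the final clause of condition~(3) for the fixed tuple $(P_0)\in\bbs_1$.

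\emph{Main difficulty.} There is no analytic or deep algebraic input: beyond Theorem~\ref{thm:extension} and the subgroup property (Theorem~\ref{thm:rho invaiants}(2)), the whole content is combinatorial. The delicate point is the simultaneous recursion in the converse direction, where one must check that the ``redirection'' guaranteed by condition~(2) of Definition~\ref{def:vanishing 2} is coherent enough to grow a distinguished tuple one coordinate at a time while leaving all earlier coordinates and their representations untouched, and the accompanying need to line up the two indexing conventions so that, for instance, condition~$(i)$ of Definition~\ref{def:vanishing} matches the vanishing of $\rho(K,\phi_{x_0,\dots,x_{i-1}})$ rather than that of a neighbouring representation.
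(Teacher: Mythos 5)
Your proof is correct. The paper deliberately omits the argument (``It is easy to see the following proposition, of which proof is omitted''), so there is no written proof to compare against; what you have produced is exactly the bookkeeping one would expect the authors to have had in mind, and it is carried out carefully. In the forward direction, defining $\bbs_n$ as the collection of all tuples arising from valid choice sequences $(x_0,\dots,x_{n-2})$, verifying nonemptiness via $x_i=0$, and handling the missing case $\rho(K,\phi_0)=0$ via $\phi_{0,0}=i\circ\phi_0$ and the subgroup property (Theorem~\ref{thm:rho invaiants}(2)) are all legitimate and the cleanest route; one could alternatively observe that the existence of a self-annihilating $P_0\subset\cla_0$ forces $K$ to be algebraically slice and invoke Theorem~\ref{thm:rho invaiants}(3), but your route is equally valid. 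In the converse direction, the simultaneous induction on choice sequences carrying along a ``witness tuple'' $T(x_0,\dots,x_{k-1})\in\bbs_n$ is precisely the right device for extracting, from condition~(2) of Definition~\ref{def:vanishing 2}, the family of submodules $P_{k}(x_0,\dots,x_{k-1})$ demanded by Definition~\ref{def:vanishing}, and the index-matching between condition~$(i)$ of Definition~\ref{def:vanishing} and the clause $\rho(K,\phi_i)=0$ of condition~(3) of Definition~\ref{def:vanishing 2} is handled correctly.
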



\section{Vanishing of $\rho$--invariants}\label{sec:vanishing}
In this section, we investigate $\rho$--invariants for a knot which is given as the connected sum of a pair of knots, and prove Theorem~\ref{main}. Let $K$ and $J$ be knots in $S^3$, and let $L=K\# J$. We denote by $M_K$ the 0--surgery on $K$ in $S^3$, and  denote by $M_J$ and
$M_L$ the 0--surgery on $J$ in $S^3$ and the 0--surgery on $L$ in $S^3$, respectively. Throughout this section, unless mentioned otherwise, the symbol $M$ denotes $M_L$. We write $E_K$ for $S^3 - N(K)$, the exterior of $K$, and write $E_J$ and $E_L$ for those of $J$ and $L$, respectively. Let us denote meridians of $K$, $J$, and $L$ by $m_K$, $m_J$, and $m_L$, respectively. We also write $\ell_K$, $\ell_J$, and $\ell_L$ for longitudes of $K$, $J$, and $L$, respectively. We abuse notations so that they also represent their homology classes and homotopy classes as well.
 For a group $G$ and elements $g_1, g_2, \ldots, g_m\in G$, we denote the normal subgroup of $G$ generated by $g_i$, $1\le i\le m$, by $\la g_1, g_2,\ldots, g_m\ra$.

\subsection{Splitting of higher-order Alexander modules and Blanchfield forms}\label{subsec:splitting H_1}
We aim to find certain conditions under which the vanishing property of $\rho$--invariants splits via connected sum operation. In doing so, we need to verify that certain representations $\pi_1(M)\to \G_i$, $0\le i\le n$, induce representations $\pi_1(M_K)\to \G_i$ so that we can define higher-order Alexander modules and $\rho$--invariants of $K$. The following lemma is well-known. For instance, see~\cite{kk08}.

\begin{lem}\label{lem:pi_1 of M}
$\pi_1(M)/\la\ell_K\ra = \pi_1(M)/\la\ell_J\ra = \pi_1(M_K)\ast\pi_1(M_J)/\la m_K  m_J^{-1}\ra$.
\end{lem}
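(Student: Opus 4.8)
The plan is to use the standard description of the connected sum and a van Kampen argument. First I would recall how $L = K \# J$ is built: the exterior $E_L$ is obtained by gluing $E_K$ and $E_J$ along an annulus $A$ in their boundary tori, where $A$ is a neighborhood of a meridian; under this gluing the meridian $m_K$ of $K$ is identified with the meridian $m_J$ of $J$ (this common class becomes $m_L$), while the longitude $\ell_L$ becomes the product $\ell_K \ell_J$ (in suitable coordinates/orderings). The annulus $A$ is simply connected up to the $\mathbb{Z}$ generated by the meridian, so van Kampen gives
\[
\pi_1(E_L) = \pi_1(E_K) \ast_{\la m_K = m_J\ra} \pi_1(E_J) = \pi_1(E_K)\ast \pi_1(E_J)/\la m_K m_J^{-1}\ra .
\]

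Next I would pass to the $0$--surgeries. The $0$--surgery $M_L$ is obtained from $E_L$ by attaching a solid torus along $\ell_L$ (the $0$--framed longitude), so $\pi_1(M_L) = \pi_1(E_L)/\la \ell_L\ra = \pi_1(E_L)/\la \ell_K\ell_J\ra$. On the other hand $\pi_1(M_K)\ast\pi_1(M_J) = \pi_1(E_K)\ast\pi_1(E_J)/\la \ell_K, \ell_J\ra$. So the right-hand side of the lemma is
\[
\pi_1(E_K)\ast\pi_1(E_J)\big/\la \ell_K,\ \ell_J,\ m_K m_J^{-1}\ra ,
\]
and I must compare this to $\pi_1(M_L)/\la \ell_K\ra$ and to $\pi_1(M_L)/\la \ell_J\ra$, i.e.\ to $\pi_1(E_K)\ast\pi_1(E_J)\big/\la \ell_K\ell_J,\ m_K m_J^{-1},\ \ell_K\ra$ (and similarly with $\ell_J$). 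The key algebraic point is that modulo the relations $m_K = m_J$ and $\ell_K$, the element $\ell_K\ell_J$ is conjugate to $\ell_J$ (in fact equal to it once $\ell_K$ is killed), and $\ell_K$ is central in $\pi_1(E_K)$ (a longitude lies in the boundary torus and commutes with the meridian; more to the point, for the normal-closure computation we only need that killing $\ell_K$ and $\ell_K\ell_J$ together is the same as killing $\ell_K$ and $\ell_J$ together). Hence all three normal closures
\[
\la \ell_K,\ \ell_J,\ m_Km_J^{-1}\ra = \la \ell_K,\ \ell_K\ell_J,\ m_Km_J^{-1}\ra = \la \ell_J,\ \ell_K\ell_J,\ m_Km_J^{-1}\ra
\]
coincide, giving all three isomorphisms of the lemma at once.

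I expect the only genuinely delicate point to be bookkeeping the longitude identification: being careful that under the connected-sum gluing $\ell_L = \ell_K\ell_J$ (or a conjugate thereof) with the correct orientations and orderings, and that $m_K$ and $m_J$ are genuinely identified rather than, say, $m_K$ with $m_J^{-1}$. Once the presentation of $\pi_1(E_L)$ and the images of $m_L, \ell_L$ are pinned down, the rest is the elementary observation about normal subgroups generated by $\{\ell_K,\ell_J\}$ versus $\{\ell_K, \ell_K\ell_J\}$ versus $\{\ell_J,\ell_K\ell_J\}$ (any two of $\ell_K,\ell_J,\ell_K\ell_J$ normally generate the third). Since the statement is asserted to be well known with a reference to \cite{kk08}, I would keep the exposition brief, citing van Kampen for the presentation of $\pi_1(E_L)$ and spelling out only the normal-closure identity.
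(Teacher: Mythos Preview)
Your proposal is correct and follows essentially the same route as the paper: present $\pi_1(E_L)$ via van Kampen as $\pi_1(E_K)\ast\pi_1(E_J)/\la m_Km_J^{-1}\ra$, pass to $\pi_1(M)$ by killing $\ell_L=\ell_K\ell_J$, and then observe that $\la \ell_K\ell_J,\ell_K\ra=\la \ell_K,\ell_J\ra=\la \ell_K\ell_J,\ell_J\ra$ as normal subgroups. One small correction: the longitude $\ell_K$ is \emph{not} central in $\pi_1(E_K)$ in general (it only commutes with $m_K$), but as you yourself note, this is irrelevant since the normal-closure identity is all that is needed.
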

\begin{proof} 
Note the following equalities:
    \begin{align*}
    \pi_1(M)/\la \ell_K\ra & = \pi_1(E_K)*\pi_1(E_J)/\la  m_K m_J^{-1},\,\ell_K\ell_J,\, \ell_K\ra \\
    & = \pi_1(E_K)*\pi_1(E_J)/\la  m_K m_J^{-1},\,\ell_K,\, \ell_J\ra \\
    & = \pi_1(M_K)\ast\pi_1(M_J)/\la m_K  m_J^{-1}\ra.
    \end{align*}
One can prove that $\pi_1(M)/\la\ell_J\ra = \pi_1(M_K)\ast\pi_1(M_J)/\la m_K  m_J^{-1}\ra $ similarly.
\end{proof}

Therefore, if we have a representation $\phi_i\colon \pi_1(M)\to \G_i$ for some $i$ with $\phi_i(\ell_J) = 0$, then one can obtain a representation $\phi_i^K\colon\pi_1(M_K)\to \G_i$ by taking compositions $\pi_1(M_K)\hookrightarrow \pi_1(M_K)\ast\pi_1(M_J)/\la m_K  m_J^{-1}\ra \xrightarrow{\cong} \pi_1(M)/\la\ell_J\ra \xrightarrow{\phi_i}\G_i$.  
Similarly, for a given representation $\phi_i\colon \pi_1(M)\to \G_i$ for some $i$ with $\phi_i(\ell_J) = 0$, the representation $\phi_i^J\colon \pi_1(M_J)\to \G_i$ is induced from $\phi_i$.   We note the restrictions of $\phi_i^K$ to $E_K$ is the same as that of $\phi_i$ to $E_K$, that is, $\phi_i ^K|_{\pi_1(E_K)}=\phi_i |_{\pi_1(E_K)}$. Since $\phi_i(\ell_J) = \phi_i(\ell_K) = 0$, the homomorphism $\phi_i |_{\pi_1(E_K)}$ extends to $\pi_1(M_K)$, and this extension of $\phi_i |_{\pi_1(E_K)}$ to $\pi_1(M_K)$ is the same as $\phi_i^K$, and similarly for $M_J$ and $\phi_i^J$. Henceforth we drop the superscripts from $\phi_i^K$ and $\phi_i^J$ if they are clearly understood in the context.

For $\phi_0\colon \pi_1(M)\to \G_0$, since $\ell_J\in\pi_1(M)^{(2)}$ and $\G_0$ is $(0)$--solvable, we have $\phi_0(\ell_J)=0$. Therefore, the representations $\phi_0$ are induced for $\pi_1(M_K)$ and $\pi_1(M_J)$, and it is well-known that $\cla_0^L = \cla_0^{K} \oplus \cla_0^{J}$ and $\bl_0^L = \bl_0^{K}\oplus \bl_0^{J}$. But this direct sum decomposition does not hold in general for higher-order cases. In fact, $\dim_{\bbk_n} \cla_n^L\ge \dim_{\bbk_n} \cla_n^{K} + \dim_{\bbk_n} \cla_n^{J}$ and sometimes the inequality becomes strict. Nonetheless, we can obtain the direct sum decompositions $\cla_n^L=\cla_n^{K} \oplus \cla_n^{J}$ and $\bl_n^L = \bl_n^{K}\oplus \bl_n^{J}$ if the elements $x_i$ are inductively chosen from the summands corresponding to $K$ while constructing representations $\pi_1(M)\to \G_n$. This procedure is stated in the following theorem.

\begin{thm}\label{thm:splitting H_1}
Under the notations introduced above, the following hold.
\begin{itemize}
    \item[(0)] Let $\phi_0\colon \pi_1(M)\to \G_0$ be the abelianization. Then the following hold.
        \begin{itemize}
        \item[--] $\phi_0(\ell_J)=0 $, and we have induced representations $\phi_0$ on $\pi_1(M_K)$ and $\pi_1(M_J)$.
        \item[--] $\cla_0^L = \cla_0^{K}\oplus \cla_0^{J}$ and $\bl_0^L = \bl_0^{K}\oplus \bl_0^{J}$.
        \end{itemize}
    \item[(1)] Let $x_0\in \cla_0^{K,\phi_0}\oplus 0\subset \cla_0^{L,\phi_0}$ and $\phi_1\colon \pi_1(M_L)\to \G_1$ be the representation associated to $x_0$.  Then the following hold:
        \begin{itemize}
        \item[--] $\phi_1(\ell_J)=0$, and we have induced representations $\phi_1$ on $\pi_1(M_K)$ and $\pi_1(M_J)$.
        \item[--] $\phi_1\colon \pi_1(M_J)\to \G_1$ factors through $\phi_0$, and hence $\cla_1^{J}=\cla_0^{J}\otimes_{\clr_0} \clr_1$ and $\bl_1^{J,\phi_1} = \bl_1^{J,\phi_0}$.
        \item[--] $\cla_1^L = \cla_1^{K}\oplus \cla_1^{J}$ and $\bl_1^L = \bl_1^{K}\oplus \bl_1^{J}$.
         \end{itemize}
    \item[$(2)$] Let $x_1\in \cla_1^{K,\phi_1}\oplus 0\subset \cla_1^{L,\phi_1}$ and $\phi_2\colon \pi_1(M_L)\to \G_2$ be the representation associated to $x_0, x_1$.  Then the following hold:
        \begin{itemize}
        \item[--] $\phi_2(\ell_J)=0$, and we have induced representations $\phi_2$ on $\pi_1(M_K)$ and $\pi_1(M_J)$.
        \item[--] $\phi_2\colon \pi_1(M_J)\to \G_2$ factors through $\phi_0$, and hence $\cla_2^{J}=\cla_0^{J}\otimes_{\clr_0} \clr_2$ and $\bl_2^{J,\phi_2} = \bl_2^{J,\phi_0}$.
        \item[--] $\cla_2^L = \cla_2^{K}\oplus \cla_2^{J}$ and $\bl_2^L = \bl_2^{K}\oplus \bl_2^{J}$.
        \end{itemize}
\centerline{\vdots}
    \item[$(n)$] Let $x_{n-1}\in \cla_{n-1}^{K,\phi_{n-1}}\oplus 0\subset \cla_{n-1}^{L,\phi_{n-1}}$ and $\phi_n\colon \pi_1(M_L)\to \G_n$ be the representation associated to $x_0, x_1,\ldots,x_{n-1}$.  Then the following hold:
        \begin{itemize}
        \item[--] $\phi_n(\ell_J)=0$, and we have induced representations $\phi_n$ on $\pi_1(M_K)$ and $\pi_1(M_J)$.
         \item[--] $\phi_n\colon \pi_1(M_J)\to \G_n$ factors through $\phi_0$, and hence $\cla_n^{J}=\cla_0^{J}\otimes_{\clr_0} \clr_n$ and $\bl_n^{J,\phi_n} = \bl_n^{J,\phi_0}$.
        \item[--] $\cla_n^L = \cla_n^{K}\oplus \cla_n^{J}$ and $\bl_n^L = \bl_n^{K}\oplus \bl_n^{J}$.
        \end{itemize}
\centerline{\vdots}

\end{itemize}
\end{thm}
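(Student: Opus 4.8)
The plan is to prove the theorem by induction on $n$. Case $(0)$ is classical and stated already. Assume the assertions of case $(n-1)$ hold; we are given $x_i\in\cla_i^{K,\phi_i}\oplus 0\subset\cla_i^{L,\phi_i}$ for $0\le i\le n-1$, the representation $\phi_n=\phi_{0,x_0,\dots,x_{n-1}}\colon\pi_1(M_L)\to\G_n$ associated to these choices, and by the inductive hypothesis $\phi_{n-1}(\ell_J)=0$, the splitting $\cla_{n-1}^L=\cla_{n-1}^K\oplus\cla_{n-1}^J$ with $\bl_{n-1}^L=\bl_{n-1}^K\oplus\bl_{n-1}^J$, and the fact that $\phi_{n-1}\colon\pi_1(M_J)\to\G_{n-1}$ factors through $\phi_0$. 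First I would establish $\phi_n(\ell_J)=0$: writing $\ell_J$ as a product of conjugates of commutators coming from $\pi_1(E_J)$, it lies in $\pi_1(M_L)^{(2)}$, but more is needed since $\G_n$ is only $(n)$-solvable. The right statement is that $\ell_J\in\ker\phi_{n-1}$ already (inductive hypothesis), so $\ell_J$ defines a class in $\cla_{n-1}^{L,\phi_{n-1}}$; by Theorem~\ref{thm:extension}(3), $\phi_n(\ell_J)=0$ follows once we check $\bl_{n-1}^L(x_{n-1},\ell_J)=0$, and this holds because $x_{n-1}$ lies in the $K$-summand while $\ell_J$, being supported in $E_J$, lies in the $J$-summand, so the cross term vanishes by the direct-sum decomposition of $\bl_{n-1}^L$. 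Once $\phi_n(\ell_J)=0$ we invoke Lemma~\ref{lem:pi_1 of M} and the discussion following it to get the induced representations $\phi_n$ on $\pi_1(M_K)$ and $\pi_1(M_J)$.

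Next I would show that $\phi_n\colon\pi_1(M_J)\to\G_n$ factors through $\phi_0$. Since $x_{n-1}$ is supported on the $K$-side, its restriction to $\pi_1(E_J)\subset\pi_1(M_J)$, viewed via $\tilde f$ as a twisted cocycle, is zero (again by the $K$-$J$ orthogonality of $\bl_{n-1}^L$, this time applied to all $y$ coming from $E_J$). Hence the representation $\phi_n^J$ is associated to $\phi_{n-1}^J$ and $0\in\cla_{n-1}^J$, i.e. $\phi_n^J=i\circ\phi_{n-1}^J$ in the notation preceding Theorem~\ref{thm:factoring through}. Combined with the inductive hypothesis that $\phi_{n-1}^J$ factors through $\phi_0$, we get that $\phi_n^J$ factors through $\phi_0$; Theorem~\ref{thm:factoring through} (applied repeatedly, or the displayed consequence just after it) then yields $\cla_n^J=\cla_0^J\otimes_{\clr_0}\clr_n$ and the identification $\bl_n^{J,\phi_n}=\bl_n^{J,\phi_0}$.

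For the splitting $\cla_n^L=\cla_n^K\oplus\cla_n^J$ I would use a Mayer--Vietoris argument for the decomposition $M_L=E_K\cup_T E_J$ (glued along the torus $T$, with $m_K$ identified to $m_J$ and the longitudes killed), with $\clr_n$-coefficients twisted by $\phi_n$. The point is that with these coefficients the torus $T$ contributes nothing to $H_1$: the meridian $m_K=m_J$ has infinite order image and $H_*(T;\clr_n)$ behaves as in \cite{cot03} (the relevant computation is that $H_1$ of a circle with nontrivial PTFA coefficients vanishes and $H_0$ as well), so the Mayer--Vietoris sequence degenerates to give $H_1(M_L;\clr_n)\cong H_1(E_K;\clr_n)\oplus H_1(E_J;\clr_n)$; using $\phi_n|_{E_K}=\phi_n^K|_{E_K}$ and the fact that $\phi_n^K(\ell_K)=0$ so that $H_1(E_K;\clr_n)=H_1(M_K;\clr_n)=\cla_n^K$ (and likewise for $J$) gives the desired module splitting. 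Finally, for $\bl_n^L=\bl_n^K\oplus\bl_n^J$ I would check that the cross terms vanish: given $[x]$ in the $K$-summand and $[y]$ in the $J$-summand, choose the $\clr_n$-chain $c$ witnessing $\bl_n^L([x],[y])$ with support in $E_K$ (possible because $x$ is represented by a cycle in $E_K$ and $E_K$ is where the relevant $2$-chain lives), so that the equivariant intersection $c\cdot y$ is computed in $E_K\cap E_J=T$, which carries no relevant intersections; hence the cross term is $0$, and restricting $\bl_n^L$ to each summand recovers $\bl_n^K$ and $\bl_n^J$ by naturality of the Blanchfield form under the inclusions $E_K\hookrightarrow M_L$, $M_K$.

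The main obstacle I expect is the Mayer--Vietoris computation giving the clean module splitting $\cla_n^L\cong\cla_n^K\oplus\cla_n^J$ together with its compatibility with the linking forms: one must be careful that the coefficient system $\phi_n$ restricted to $\pi_1(T)$ is nontrivial on the meridian (so that $H_*(T;\clr_n)$ is acceptably small) and that the connecting maps in the long exact sequence vanish, which is exactly where the hypothesis that $x_i$ is chosen "along $K$" (equivalently, that $\phi_n$ restricted to the $J$-side factors through $\phi_0$) is used. The verification that the geometric $2$-chain witnessing the Blanchfield pairing can be pushed into $E_K$, needed to kill the cross terms of $\bl_n^L$, is the other delicate point; everything else is bookkeeping with the inductive hypothesis and the already-cited results of Cochran--Orr--Teichner and Cha.
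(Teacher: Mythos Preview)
Your inductive skeleton, the argument that $\phi_n(\ell_J)=0$ via Theorem~\ref{thm:extension}(3) and the orthogonality of $\bl_{n-1}^L$, and the argument that $\phi_n^J$ factors through $\phi_0$, are all correct and are exactly what the paper does. The Blanchfield splitting via support of chains is also the paper's argument.

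The Mayer--Vietoris step, however, is wrong as written, and not just in details. First, the decomposition $M_L=E_K\cup_T E_J$ along a torus does not exist: gluing $E_K$ to $E_J$ along their full boundary tori produces a splice, not the zero-surgery on $K\# J$. The correct decomposition is of the \emph{exterior} $E_L=E_K\cup E_J$ glued along an annulus, homotopy equivalent to the meridian circle $m_L$; one then passes to $M_L$ by quotienting $H_1(E_L;\clr_n)$ by $\langle\ell_L\rangle$. Second, even for the circle $m_L$ one does not have $H_0(m_L;\clr_n)=0$; rather $H_0(m_L;\clr_n)\cong\clr_n/(\mu-1)\clr_n$, and the paper must (and does) check that the map into $H_0(E_K;\clr_n)\oplus H_0(E_J;\clr_n)$ is injective. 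Third, and most importantly, your claim that $\phi_n^K(\ell_K)=0$ implies $H_1(E_K;\clr_n)=H_1(M_K;\clr_n)$ is false: the vanishing of $\phi_n(\ell_K)$ only guarantees that $\ell_K$ defines a class in $H_1(E_K;\clr_n)$, not that this class is zero. In general $H_1(M_K;\clr_n)=H_1(E_K;\clr_n)/\langle\ell_K\rangle$ with $\ell_K$ possibly nonzero. The paper handles this by writing $\cla_n^L=H_1(E_L;\clr_n)/\langle\ell_L\rangle$ with $\ell_L=\ell_K\ell_J$, and then using the already-established fact that $\phi_n^J$ factors through $\phi_0$ to conclude $H_1(E_J;\clr_n)=H_1(E_J;\clr_0)\otimes_{\clr_0}\clr_n$, in which $\ell_J=0$ (classically $\ell_J\in\pi_1(E_J)^{(2)}$ dies in the Alexander module). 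Thus $(\ell_K,\ell_J)=(\ell_K,0)$ in the direct sum, and quotienting gives $\cla_n^K\oplus\cla_n^J$. This asymmetric use of the hypothesis---the $J$-side longitude vanishes, the $K$-side longitude must be killed by hand---is precisely the point your sketch misses.
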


\begin{proof}
We use an induction argument. The statement $(0)$ is a well-known classical result. For $n>0$, assuming the statements $(1)$ through $(n-1)$ hold, we prove that the statement $(n)$ holds.

For $x_{n-1}=(x_{n-1}^K,0)\in \cla_{n-1}^{K,\phi_{n-1}}\oplus 0\subset \cla_{n-1}^{L,\phi_{n-1}}$, the representation $\phi_n = \phi_{n-1,x_{n-1}}$ is defined by Theorem~\ref{thm:extension}. Since $\phi_{n-1}(\ell_J)=0$ by the assumption, we can consider $\ell_J$ as an element in $\cla_{n-1}^L$. By the assumption that $\cla_{n-1}^L = \cla_{n-1}^{K}\oplus \cla_{n-1}^{J}$, we can express $\ell_J$ as $(0,\ell_J)\in \cla_{n-1}^{K}\oplus \cla_{n-1}^{J}$. Then $\bl_{n-1}^L(x_{n-1},\ell_J) = \bl_{n-1}^{K}(x_{n-1}^K,0) + \bl_{n-1}^{J}(0,\ell_J) = 0$, and by Theorem~\ref{thm:extension}(3) we have $\phi_n(\ell_J)=0$.

We show that $\phi_n(y) = 0$ for all $y\in \pi_1(M_J)^{(1)}$, which will imply that $\phi_n\colon \pi_1(M_J)\to \G_n$ factors through $\phi_0$. By Theorem~\ref{thm:factoring through}, this will also show that $\cla_n^{J}=\cla_0^{J}\otimes_{\clr_0} \clr_n$ and $\bl_n^{J,\phi_n} = \bl_n^{J,\phi_0}$.  Let $y\in \pi_1(M_J)^{(1)}$. Then we can regard $y$ as an element in $\cla_0^{J}$. Since $\cla_0^L = \cla_0^{K}\oplus \cla_0^{J}$ and $\bl_0^L = \bl_0^{K}\oplus \bl_0^{J}$ and $x_0\in \cla_0^{K}$, it follows that $\bl_0^L(x_0,y) = 0$. Then by Theorem~\ref{thm:extension}(3), $\phi_1(y)=0$, and $y$ can be considered as an element in $\cla_1^{J}$. Using this argument repeatedly and by the assumption that the statements $(1)$--$(n-1)$ hold, we obtain that $\phi_n(y) =0$.

To prove that $\cla_n^L = \cla_n^{K}\oplus \cla_n^{J}$, we first show that $H_1(E_L;\clr_n)=H_1(E_K;\clr_n)\oplus H_1(E_J;\clr_n)$. Note that $E_L=E_K\cup E_J$ where the meridians  $ m_K$ and $ m_J$ are identified as the meridian $ m_L$. Then with the coefficient systems induced from $\phi_n$, we obtain the following exact sequence using a Mayer-Vietoris argument.
\begin{align*}
    \to H_1( m_L;\clr_n)& \to H_1(E_K;\clr_n)\oplus H_1(E_J;\clr_n)\to
    H_1(E_L;\clr_n) \\
    \to H_0( m_L;\clr_n)& \xrightarrow{i} H_0(E_K;\clr_n)\oplus H_0(E_J;\clr_n)\to
    H_0(E_L;\clr_n).
\end{align*}
Note that since $ m_L$ and $E_J$ are subspaces of $M_J$, the maps $\phi_n$ on $\pi_1( m_L)$ and $\pi_1(E_J)$ factor through $\phi_0$. Since Theorem~\ref{thm:factoring through}(1) works for any spaces as well, we have $H_\ast( m_L;\clr_n)=H_\ast( m_L;\clr_0)\otimes_{\clr_0} \clr_n$ and $H_\ast(E_J;\clr_n)=H_\ast(E_J;\clr_0)\otimes_{\clr_0} \clr_n$. In particular, $H_0( m_L;\clr_n)=H_0(E_J;\clr_n) = \clr_n/\{\phi_n(\alpha)x-x\mid \alpha\in \pi_1(m_L), x\in\clr_n\} = \clr_n/(\mu-1)\clr_n$ as right $\clr_n$-modules, and the map $i$ in the above exact sequence is injective. (In fact, if $n\ge 1$ and $\phi_{n}$ does not factor through $\phi_0$,  then $H_0(E_K;\clr_n)=0$  and the map $i$ is an isomorphism, but we do not need this fact in this paper.)
Since $ m_L\cong S^1$ and $\phi_0( m_L)\ne 0$,  we have $H_1( m_L;\clr_n)=0$. Thus $H_1(E_L;\clr_n)= H_1(E_K;\clr_n)\oplus H_1(E_J;\clr_n)$.

Using the above direct sum decomposition of $H_1(E_L;\clr_n)$ and the fact that $\ell_L=\ell_K\ell_J$, we can deduce that
\begin{align*}
    \cla_n^L & =H_1(M;\clr_n)\\
    & = H_1(E_L;\clr_n)/\la \ell_L\ra\\
    & = H_1(E_K;\clr_n)\oplus H_1(E_J;\clr_n)/\la (\ell_K,\ell_J)\ra
\end{align*}
Here the second equality follows from a Mayer-Vietoris Sequence. Moreover, since $H_1(E_J;\clr_n)=H_1(E_J;\clr_0)\otimes_{\clr_0} \clr_n$, we have $\ell_J=0$ in $H_1(E_J;\clr_n)$ and $H_1(E_J;\clr_n) = H_1(M_J;\clr_n)$. Therefore,
\begin{align*}
    \cla_n^L & = H_1(E_K;\clr_n)\oplus H_1(E_J;\clr_n)/\la (\ell_K,0)\ra\\
     &= H_1(E_K;\clr_n)/\la \ell_K\ra\oplus H_1(M_J;\clr_n)\\
     & = \cla_n^{K}\oplus \cla_n^{J}.
\end{align*}

Finally, we show that $\bl_n^L = \bl_n^{K}\oplus \bl_n^{J}$. Let $[x]$ and $[y]$ be classes in $H_1(M_L;\clr_n)$. Then $\bl_n^L([x],[y])=\frac{1}{r}(c\cdot y)$, where $c$ is a chain in
$C_2(M_L;\clr_n)$ such that $\partial c=xr$ for some $r\in \clr_n$ and
$c\cdot y$ is the equivariant intersection over $\clr_n$.
Using that
  $H_1(M;\clr_n)= H_1(M_K;\clr_n)\oplus H_1(M_J;\clr_n)$,
  $H_1(M;\clr_n)= H_1(E_L;\clr_n)/\langle \ell_L\rangle$,
  $H_1(M_K;\clr_n)= H_1(E_K;\clr_n)/\langle \ell_K\rangle$,
  and
  $H_1(M_J;\clr_n)\cong H_1(E_J;\clr_n)$, one can see that a class in $H_1(M;\clr_n)$ can be represented by a sum of chains supported by $E_K$ or $E_J$. Therefore we can obtain the desired splitting of $\bl_n^L$ as in the case of classical Blanchfield form.
\end{proof}

\subsection{Splitting of von Neumann $\rho$--invariants}\label{subsec:splitting rho}
In showing that a knot $K$ has vanishing $\rho$--invariants of higher-orders, we need existence of certain self-annihilating submodules of higher-order Alexander modules of $K$. Indeed, in \cite{kk08} the authors showed that $K$ has vanishing $\rho$--invariants of order 1 under the condition that $K$ and $J$ have coprime Alexander polynomials. This coprimeness condition was necessary only to guarantee that every self-annihilating submodule of $\cla_0^L$ can be decomposed as a direct sum of self-annihilating submodules of $\cla_0^{K}$ and $\cla_0^{J}$. Though we can define higher-order Alexander polynomials up to certain indeterminacy, over a noncommutative ring such as $\clr_n$ for $n>0$ it is not easy to use them. Moreover, eventually  we need splitting of self-annihilating submodules of $\cla_n^L$. Therefore, we choose to generalize the condition of coprimeness of classical Alexander polynomials to higher-order cases in terms of splitting of self-annihilating submodules. Since the construction of $\cla_n^L$ depends on the choice of $x_i\in \cla_i^L$, $0\le i\le n-1$, we need an assumption that for \emph{every} $\cla_n^L$ associated to certain $x_i$ for $0\le i\le n-1$,  \emph{every} self-annihilating submodule of $\cla_n^L$ is decomposed as a direct sum. We state this condition carefully below. We recall that $M=M_L$ where $L=K\# J$.

\begin{defn}\label{def:splitting sas}
Under the notations introduced above:
\begin{itemize}
    \item[(0)] Let $\phi_0\colon \pi_1(M)\to \G_0$ be the abelianization. Recall that by Theorem~\ref{thm:splitting H_1} we have $\cla_0^L = \cla_0^{K}\oplus \cla_0^{J}$ and $\bl_0^L = \bl_0^{K}\oplus \bl_0^{J}$.
        \begin{itemize}
        \item[--] A connected sum $L$ has \emph{splitting self-annihilating submodules of order 0 for $K$} if every self-annihilating submodule $P_0^L$ of $\cla_0^L$ is decomposed as $P_0^L=P_0^K\oplus P_0^J$ where $P_0^K$ and $P_0^J$ are self-annihilating submodules of $\cla_0^{K}$ and $\cla_0^{J}$.
        \end{itemize}

    \item[(1)] Let $L$ be a connected sum that has splitting self-annihilating submodules of order 0 for $K$.
    	\begin{itemize}
        \item[--] An element \emph{$x_0\in \cla_0^L$ belongs to $K$} if there exists a self-annihilating submodule $P_0^L$ of $\cla_0^L$ such that $x_0\in P_0^K\oplus 0\subset P_0^L$.
        \item[--] Recall that if $\phi_1\colon \pi_1(M)\to \G_1$ is a representation associated to $x_0$ which belongs to $K$, then by Theorem~\ref{thm:splitting H_1} we have $\cla_1^L = \cla_1^{K}\oplus \cla_1^{J}$ and $\bl_1^L = \bl_1^{K}\oplus \bl_1^{J}$. A connected sum $L$ has \emph{splitting self-annihilating submodules of order 1 for $K$} if for every $\phi_1$ associated to some $x_0$ which belongs to $K$, every self-annihilating submodule $P_1^L$ of $\cla_1^L$ is decomposed as $P_1^L=P_1^K\oplus P_1^J$ where $P_1^K$ and $P_1^J$ are self-annihilating submodules of $\cla_1^{K}$ and $\cla_1^{J}$.
        \end{itemize}

  \centerline{\vdots}

    \item[$(n)$] Let $L$ be a connected sum that has splitting self-annihilating submodules of order $n-1$ for $K$. 
        \begin{itemize}
        \item[--] For $0\le i\le n-2$, let $x_i\in \cla_i^L$ be elements which belong to $K$. For $x_{n-1}\in \cla_{n-1}^L$ where $\cla_{n-1}^L$ is associated to $x_0,\ldots, x_{n-2}$, the elements \emph{$x_0,\ldots,x_{n-1}$ belong to $K$} if there exists a self-annihilating submodule $P_{n-1}^L$ of $\cla_{n-1}^L$ such that $x_{n-1}\in P_{n-1}^K\oplus 0\subset P_{n-1}^L$.
        \item[--] Recall that if $\phi_n\colon \pi_1(M)\to \G_n$ is a representation associated to $x_0,\ldots, x_{n-1}$ which belong to $K$, then by Theorem~\ref{thm:splitting H_1} we have $\cla_n^L = \cla_n^{K}\oplus \cla_n^{J}$ and $\bl_n^L = \bl_n^{K}\oplus \bl_n^{J}$. A connected sum $L$ has \emph{splitting self-annihilating submodules of order $n$ for $K$} if for every $\phi_n$ associated to some $x_0,\ldots,x_{n-1}$ which belong to $K$, every self-annihilating submodule $P_n^L$ of $\cla_n^L$ is decomposed as $P_n^L=P_n^K\oplus P_n^J$ where $P_n^K$ and $P_n^J$ are self-annihilating submodules of $\cla_n^{K}$ and $\cla_n^{J}$.
        
        \end{itemize}
  \centerline{\vdots}

\end{itemize}

\end{defn}

Note that $L$ has splitting of self-annihilating submodules of order $0$ for $K$ and $J$ if $K$ and $J$ have coprime Alexander polynomials (for example, see \cite{kk08}). Now we are ready to give a proof of Theorem~\ref{main}. 

\begin{proof}[Proof of Theorem~\ref{main}]
Recall that we let $L=K\# J$ and $M=M_L$. By Proposition~\ref{prop:vanishing}, the knot $L$ has a vanishing system of submodules of order $n$, denoted $\bbs_n$. Using $\bbs_n$, we will construct a vanishing system of submodules of order $n$ for $K$, which will complete the proof.

Let $P=(P_0,P_1,\ldots, P_{n-1})\in \bbs_n$, where for $1\le i\le n-1$ each $P_i$ is a self-annihilating submodule of $\cla_i^{M,\phi_i}$ such that $\phi_i$ is associated to $\phi_{i-1}$ and $x_{i-1}\in P_{i-1}$. If the elements $x_0, x_1, \ldots, x_{n-1}$ belong to $K$, we say that \emph{$P$ belongs to $K$}. Now if $P$ belongs to $K$, then by Theorem~\ref{thm:splitting H_1}, for $0\le i\le n-1$ we obtain the representations $\phi_i^K\colon \pi_1(M_K)\to \G_i$ and $\phi_i^J\colon \pi_1(M_J)\to \G_i$ which are induced from $\phi_i$, and we have $\cla_i^L = \cla_i^{K}\oplus \cla_i^{J}$ and $\bl_i^L = \bl_i^{K}\oplus \bl_i^{J}$. Furthermore, since $L$ has splitting self-annihilating submodules of order $n-1$ for $K$, for $0\le i\le n-1$ we have $P_i = P_i^K\oplus P_i^J$ where $P_i^K$ and $P_i^J$ are self-annihilating submodules of $\cla_i^{K}$ and $\cla_i^{J}$, respectively. For each $P\in \bbs_n$ which belongs to $K$, let $P^K = (P_0^K,P_1^K,\ldots, P_{n-1}^K)$. Now we let $\bbs_n^K=\{P^K\,\mid\, P\in \bbs_n \mbox{ and } P \mbox{ belongs to } K\}$.

We show that $\bbs_n^K$ is nonempty. Pick an arbitrary element $P=(P_0,P_1,\ldots, P_{n-1})$ in $\bbs_n$ where each $\phi_i$ is associated to $\phi_{i-1}$ and $y_{i-1}\in P_{i-1}$ for $1\le i\le n-1$. Since $L$ has splitting self-annihilating submodules of order $n-1$, we have $P_0=P_0^K\oplus P_0^J$. Pick $x_0\in P_0^K\oplus 0\subset P_0$. Since Property~(2) in Definition~\ref{def:vanishing 2} holds for $P$, after re-labeling if necessary, we may assume that $\phi_1$ is associated to $\phi_0$ and $x_0$. Since $x_0$ belongs to $K$, and since $L$ has splitting of self-annihilating submodules of order $n-1$, we have $P_1=P_1^K\oplus P_1^J$. Now pick $x_1\in P_1^K\oplus 0\subset P_1$. By using the argument above again, we may assume that $\phi_2$ is associated to $\phi_1$ and $x_1$,  and that $P_2=P_2^K\oplus P_2^J$. By repeating this argument as necessary, we can find $P\in \bbs_n$ such that each $\phi_i$ is associated to $\phi_{i-1}$ and $x_{i-1}\in P_{i-1}^K\oplus 0\subset P_{i-1}$ for $1\le i\le n-1$, which implies that $P$ belongs to $K$. 

We show that $\bbs_n^K$ is a vanishing system of submodules of order $n$ for $K$. We need to show that each $P^K$ in $\bbs_n^K$ satisfies Properties (1)--(3) in Definition~\ref{def:vanishing 2}. Let $P^K$ be an element in $\bbs_n^K$ as defined above. Since $x_0,\ldots, x_{n-1}$ belong to $K$, there exist $x_i^K\in P_i^K$ for $0\le i\le n-1$ such that $x_i=(x_i^K,0)\in P_i^K\oplus 0\subset \cla_i^K\oplus \cla_i^J$. Note that each $\cla_i^K$ is associated to $\phi_i^K$. Therefore, to show that $P^K$ satisfies Property~(1), it suffices  to show that each $\phi_i^K$ is associated to $\phi_{i-1}^K$ and $x_{i-1}^K$ for $1\le i\le n-1$. For every $x\in \cla_{i-1}^K$, since $\bl_{i-1}^L=\bl_{i-1}^K\oplus \bl_{i-1}^J$, we have $\bl_{i-1}^K(x_{i-1}^K,x)=\bl_{i-1}^L(x_{i-1},(x,0))$ where $(x,0)\in \cla_{i-1}^K\oplus \cla_{i-1}^J=\cla_{i-1}^L$. Also note that $\phi_i^K$ is induced from $\phi_i$ which is associated to $\phi_{i-1}$ and $x_{i-1}$ for $1\le i\le n-1$. Combining this observation with Theorem~\ref{thm:extension}, one can deduce that the representation $\pi_1(M_K)\to \G_i$ associated to $\phi_{i-1}^K$ and $x_{i-1}^K$ is the same as $\phi_i^K$.

We show that $P^K$ satisfies Property~$(2)$. Fix an integer $i$ such that $0\le i\le n-2$. Let $y_i^K\in P_i^K$, and let $y_i=(y_i^K,0)\in P_i^K\oplus P_i^J = P_i$. Since $P$ satisfies Property~(2), there exists $P' =(P_0',P_1',\ldots, P_{n-1}')\in \bbs_n$ with each $P_j'$ associated to some $\phi_j'\colon \pi_1(M)\to \G_j$ such that the following hold: 
\begin{itemize}
\item	[(a)] for $0\le j\le i$, we have $\phi_j'=\phi_j$ and $P_j'=P_j$, 
\item[(b)] the representation $\phi_{i+1}'$ is associated to $\phi_i$ and $y_i$.
\end{itemize}
In particular, we have $P_j'=(P_j')^K\oplus (P_j')^J$ where $(P_j')^K=P_j^K $ and $(P_j')^J=P_j^J$ for $j\le i$. Then since $L$ has splitting self-annihilating submodules of order $n-1$, and since $y_i$ belongs to $K$ by its definition, we have $P_{i+1}'=(P_{i+1}^K)'\oplus (P_{i+1}^J)'$ for some self-annihilating submodules $(P_{i+1}^K)'$ and $(P_{i+1}^J)'$ of $\cla_{i+1}^K$ and $\cla_{i+1}^J$. Now, choose an arbitrary element $y_{i+1}^K\in(P_{i+1}^K)'$, and we let $y_{i+1}=(y_{i+1}^K,0)\in (P_{i+1}^K)'\oplus 0\subset P_{i+1}'$. Then $y_{i+1}$ belongs to $K$. Using the argument above again with $y_{i+1}$, after re-labeling if necessary, we may assume that $P_{i+2}'$ splits as $P_{i+2}'=(P_{i+2}^K)'\oplus (P_{i+2}^J)'$. Now by repeating this argument as necessary, we may assume that for $i+1\le j\le n-1$ we also have $P_j' = (P_j')^K\oplus (P_j')^J$ and the representation $\phi_j'$ is associated to $\phi_{j-1}'$ and some $x_{j-1}' = ((x_{j-1}')^K,0)\in( P_{j-1}')^K\oplus (P_{j-1}')^J$ such that $x_i'=y_i$. (Recall that $P_j'$ are equal to $P_j$ for $0\le j\le i$.) Therefore, $P'$ belongs to $K$, and we obtain $(P')^K\in \bbs_n^K$ where $(P')^K = ((P_0')^K,(P_1')^K,\ldots, (P_{n-1}')^K)$.  For $(P')^K$, we have $(P_j')^K = P_j^K$ for $j\le i$ and $(\phi_{i+1}')^K$ is associated to $\phi_i^K$ and $y_i^K$, and hence Property (2) holds for $P^K$.

For Property (3), we construct a cobordism $C$ between the disjoint union
$M_K\cup M_J$ and $M$. This construction is well-known, and it is briefly described below. (For
details, refer to \cite[Section 4]{cot04}.)
\begin{figure}
  \setlength{\unitlength}{0.6pt}
  \begin{picture}(291,87)
    \put(0,0){\includegraphics[scale=0.6]{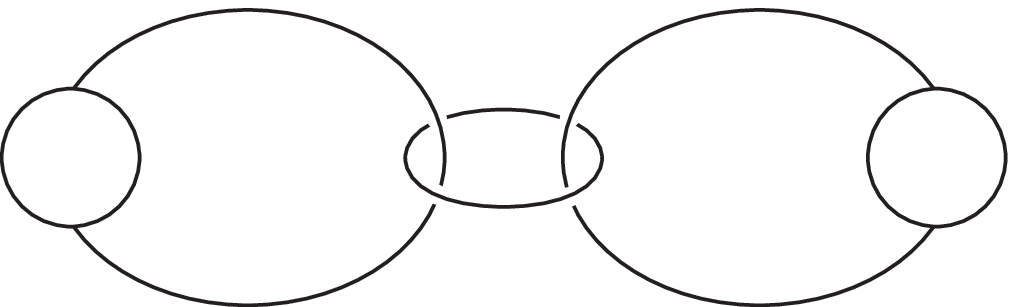}}
    \put(22,43){\makebox(0,0){$K$}}
    \put(273,43){\makebox(0,0){$J$}}
    \put(145,19){\makebox(0,0){$\eta$}}
    \put(145,62){\makebox(0,0)[b]{\small$0$}}
    \put(73,11){\makebox(0,0){\small$0$}}
    \put(219,11){\makebox(0,0){\small$0$}}
  \end{picture}
  \caption{}
  \label{fig:1}
\end{figure}
Attach a 1--handle at the top level
between $M_K\times [0,1]$ and $M_J\times [0,1]$ so that the upper boundary
is the 0--surgery on the split link $K \cup J$. Then attach a
2--handle along $\eta$ with zero framing as indicated in
Figure~\ref{fig:1}.  The resulting 4--manifold is the desired $C$. Its
boundary at the bottom, $\partial_{-}C$, is the disjoint union $-(M_K\cup
M_J)$ and the boundary at the top, $\partial_{+} C$, is $M$.  Also one can easily see that $\pi_1(C)= \pi_1(M_K)\ast \pi_1(M_J)/\langle  m_K m_J^{-1}\rangle$, and by Lemma~\ref{lem:pi_1 of M} we obtain that $\pi_1(C) = \pi_1(M)/\langle \ell_J\rangle$.

For $P\in \bbs_n$ which belongs to $K$ and $P^K\in \bbs_n^K$ as defined above, we need to show that $\rho(K,\phi_i^K)=0$ for $0\le i\le n-1$ and $\rho(K,\phi_n^K)=0$ for every representation $\phi_n^K\colon \pi_1(M_K)\to \G_n$ which is associated to $\phi_{n-1}^K$ and some $x_{n-1}^K\in P_{n-1}^K$.  Let  $i$ be an integer such that $0\le i\le n-1$. Since $\phi_i(\ell_J)=0$, the representation $\phi_i$ extends to $C$. Denote the extension of $\phi_i$ to $C$ by $\phi_i^C$.  Moreover, on $M_K\cup M_J$ which is a subspace of $C$, this extension $\phi_i^C$ agrees with the representations $\phi_i^K$ on $\pi_1(M_K)$ and $\phi_i^J$ on $\pi_1(M_J)$ induced from $\phi_i\colon \pi_1(M)\to \G_i$. Then we have
\[
\sigma^{(2)}(C, \phi_i^C) - \sigma_0(C) = \rho(L,\phi_i) - (\rho(K,\phi_i^K) + \rho(J,\phi_i^J)),
\]
where $\sigma^{(2)}(C,\phi_i^C)$ is the $L^{(2)}$-signature on $H_2(C;\bbz\G_i)$ and $\sigma_0(C)$ is the ordinary signature of $C$. By \cite[Lemma 4.2]{cot04}, we get $H_2(C;\clk_i)=H_2(C;\bbq)=0$, and therefore $\sigma^{(2)}(C,\phi_i^C)=\sigma_0(C)=0$. Since Property~(3) holds for $P$, we have $\rho(L,\phi_i)=0$. By the construction of $P$, the representation $\phi_i^J$ factors through $\phi_0$. By the assumption, the knot $J$ has a self-annihilating submodule of $\cla_0^{J}$, and therefore $J$ is algebraically slice. By Theorem~\ref{thm:rho invaiants}, it follows that $\rho(J,\phi_i^J)=0$. Therefore $\rho(K,\phi_i^K)=0$.

Let $\phi_n^K\colon \pi_1(M_K)\to \G_n$ be a representation which is associated to $\phi_{n-1}^K$ and some $x_{n-1}^K\in P_{n-1}^K$. Then $\phi_n^K$ is the same as the representation induced from $\phi_n\colon \pi_1(M)\to \G_n$ which is associated to $\phi_{n-1}$ and $x_{n-1}$ where $x_{n-1}=(x_{n-1}^K,0)\in P_{n-1}^K\oplus P_{n-1}^J = P_{n-1}$. Note that $\rho(L, \phi_n) = 0$ since Property~(3) holds for $P$. Then we can show that $\rho(K,\phi_n^K)=0$ similarly as above, and this shows that $P^K$ satisfies Property~(3).
\end{proof}


\section{Examples}\label{sec:examples}
In this section, using Theorem~\ref{main} we show that the knots constructed by Cochran, Orr and Teichner in \cite{cot03, cot04}, which are $(2)$--solvable but not $(2.5)$--solvable, are not concordant to any knot whose Alexander polynomial has degree less than or equal to two. Since the knots of Cochran--Orr--Teichner have genus 2, this will imply that they are not concordant to genus 1 knots, and hence they have concordance genus 2.

We refer the reader to \cite{cot03, cot04} for the detailed construction of the knots of Cochran, Orr and Teichner. Instead, in the next lemma we list some properties of the knots which we will need later. Before listing the properties, we discuss some technical issues. Let $K$ be one of the knots of Cochran--Orr--Teichner. For each $x\in P_0^K\subset  \cla_0^{K}$ where $P_0^K$ is a self-annihilating submodule, there is a representation $\phi_1\colon \pi_1(M_K)\to \G_1=\clk_0/\clr_0\rtimes \G_0$ associated to $\phi_0$ and $x$. Using the arguments in \cite[p.~483]{cot03}, we can replace $\G_1$ by a subgroup $\G_1'$ of $\G_1$ such that $\G_1'$ contains the image of $\phi_1$ and $\G_1'=S\rtimes \G_0$ where $S\cong \bbq[t^{\pm 1}]/(t^2-3t+1)^m$ for some positive integer $m$. (This $\G_1'$ is denoted by $\G_1$ in \cite{cot03}. Also see \cite[p.~115]{cot04}.) That is, re-labeling $\G_1'$ by $\G_1$, Theorem~\ref{main} and related theorems in this paper are all available. Henceforth, in this section we use this re-labeled $\G_1$. We list properties of $K$ in Lemma~\ref{lem:cot knot property} below, which can be found in \cite{cot03, cot04}.  Recall from Proposition~\ref{prop:ru} that $\bbk_1$ is the quotient field of $\bbz[\G_1,\G_1]=\bbz[\bbq[t^{\pm 1}]/(t^2-3t+1)^m]$. 

\begin{lem}\cite{cot03,cot04}\label{lem:cot knot property} Let $K$ be one of the knots in \cite{cot03, cot04} which are $(2)$--solvable but not $(2.5)$--solvable.
\begin{enumerate}
    \item The Alexander polynomial $\Delta_K(t)$ is $(t^2-3t+1)^2$.
    \item As $\clr_0$-modules, we have $\cla_0^{K} = \bbq[t^{\pm 1}]/(t^2-3t+1)^2$, and $\cla_0^{K}$ has a unique nontrivial proper $\clr_0$-submodule, denoted $P_0^K$. The module $P_0^K$ is also a unique self-annihilating submodule of $\cla_0^{K}$.
    \item Let $\cla_1^{K,\phi_1}$ be associated to $x\in P_0^K$. Then as right $\clr_1$-modules, we have $\cla_1^{K,\phi_1}=\clr_1/(\mu-1)(\mu-k)\clr_1$ for some $k\in \bbk_1$, and  $\cla_1^{K,\phi_1}$ has a unique nontrivial proper $\clr_1$-submodule. This unique submodule is generated by $\mu-1$, and it is a unique self-annihilating submodule of $\cla_1^{K, \phi_1}$.
    \item The knot $K$ does not have vanishing $\rho$--invariants of order $2$.
\end{enumerate}
\end{lem}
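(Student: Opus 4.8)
The plan is to prove Lemma~\ref{lem:cot knot property} by extracting each item directly from the constructions and computations of Cochran--Orr--Teichner in \cite{cot03, cot04}, using the algebraic facts assembled in Section~\ref{sec:preliminaries}. Item (1) is immediate: the knots of \cite{cot03, cot04} are built by infection (satellite) constructions on a ribbon knot whose Alexander polynomial is arranged to be $(t^2-3t+1)^2$; since infection along a curve that is null-homologous in the knot exterior does not change the Alexander module, $\Delta_K(t) = (t^2-3t+1)^2$.

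For item (2) I would argue purely module-theoretically. Since $\Delta_K(t) = (t^2-3t+1)^2$ and $t^2-3t+1$ is irreducible over $\bbq$ (its discriminant is $5$, not a square), the ring $\bbq[t^{\pm 1}]/(t^2-3t+1)^2$ is a local principal ideal ring whose unique maximal ideal is generated by $\bar p$, where $p = t^2-3t+1$; thus $\cla_0^K$ has exactly one nontrivial proper submodule, namely $P_0^K = (p)/(p^2)$. To see $\cla_0^K \cong \bbq[t^{\pm 1}]/(p^2)$ rather than, say, $(\bbq[t^{\pm1}]/(p))^2$, one uses that the Blanchfield form $\bl_0^K$ is nonsingular and that the Alexander module of a knot is cyclic exactly when it is; alternatively this is recorded explicitly in \cite{cot03, cot04}. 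Self-annihilation of $P_0^K$ then follows because $\bl_0^K$ is nonsingular and symmetric, so $(P_0^K)^\perp$ is a submodule with $\dim_\bbq (P_0^K)^\perp = \dim_\bbq \cla_0^K - \dim_\bbq P_0^K = \dim_\bbq P_0^K$; the only submodule of that dimension is $P_0^K$ itself, and one checks $P_0^K \subseteq (P_0^K)^\perp$ by a direct computation (e.g.\ $\bl_0^K(p a, p b)$ lands in the image of multiplication by $p^2$, which is $0$). That forces $P_0^K = (P_0^K)^\perp$.

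Item (3) is the one requiring the most care, and I expect it to be the main obstacle, because it involves identifying the first-order Alexander module over the noncommutative ring $\clr_1$ after the passage to the subgroup $\G_1' = S \rtimes \G_0$ with $S \cong \bbq[t^{\pm1}]/(p^m)$ described in the paragraph preceding the lemma. Here $\clr_1 \cong \bbk_1[\mu^{\pm1}]$ by Proposition~\ref{prop:ru}, a (skew) Laurent polynomial ring over the field $\bbk_1$. The strategy is: the infection construction presents $\pi_1(M_K)$ so that $H_1(M_K;\clr_1)$ is computed from a Mayer--Vietoris (or surgery) presentation in which the relevant relator coming from the longitude of the infecting knot contributes a factor; Cochran--Orr--Teichner compute this and obtain that $\cla_1^{K,\phi_1}$ is cyclic with order $(\mu-1)(\mu-k)$ for an explicit $k \in \bbk_1$ coming from the localized Blanchfield form of the infecting knot evaluated at $x$. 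Given that $\cla_1^{K,\phi_1} \cong \clr_1/(\mu-1)(\mu-k)\clr_1$, its proper nontrivial submodules correspond to proper right ideals of $\clr_1$ containing $(\mu-1)(\mu-k)\clr_1$; since $\clr_1$ is a PID and $(\mu-1)$ and $(\mu-k)$ are non-associate primes (as $k \neq 1$, which is part of the computation), there are exactly two, generated by $(\mu-1)$ and by $(\mu-k)$ respectively, and exactly one of them, the one generated by $\mu-1$, is self-annihilating with respect to $\bl_1^{K,\phi_1}$. Self-annihilation is again checked via the nonsingular symmetric form using the ``half-lives-half-dies'' dimension count over $\bbk_1$ together with the explicit description of $\bl_1$ in terms of the presentation; here one must be attentive to the involution $\bar{\phantom{a}}$ on $\clr_1$, since $(\mu-1)$ is fixed (up to units) by the involution whereas $(\mu-k)$ need not be, which is precisely why the submodule $\langle \mu-1 \rangle$ is the self-annihilating one.

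Finally, item (4) — that $K$ does not have vanishing $\rho$--invariants of order $2$ — is the content of the main nontriviality theorems of \cite{cot03, cot04}: Cochran, Orr and Teichner prove that for these knots one can choose $x_0 \in P_0^K$ and then $x_1$ in the unique self-annihilating submodule of $\cla_1^{K,\phi_1}$ so that the resulting von Neumann $\rho$--invariant $\rho(K,\phi_2)$ is a nonzero integral of a Levine--Tristram-type signature of the innermost infecting knot. By Theorem~\ref{thm:4.6} this already shows the knots are not $(2.5)$--solvable; the stronger statement that they fail to have vanishing $\rho$--invariants of order $2$ in the precise sense of Definition~\ref{def:vanishing} is what is actually established there (since by items (2) and (3) the self-annihilating submodules $P_0$ and $P_1$ are \emph{unique}, so there is no freedom in the choice, and the $\rho$--invariant is forced to be nonzero). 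I would simply cite \cite[Theorem 4.6 and its applications]{cot03} and \cite[Section 2]{cot04} for this, noting that the uniqueness in (2) and (3) is exactly what makes the cited nonvanishing computation apply verbatim to the definition used in this paper.
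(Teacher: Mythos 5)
The paper does not actually prove Lemma~\ref{lem:cot knot property}; the surrounding text says ``We list properties of $K$ in Lemma~\ref{lem:cot knot property} below, which can be found in \cite{cot03, cot04},'' and the lemma is tagged with the citation accordingly. So the comparison point is not a competing proof in the paper but whether your attempted reconstruction is internally sound. Items~(1), (2), and~(4) are fine as citation-plus-sketch.

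Your item~(3), however, contains a genuine error that contradicts the very statement you are trying to prove. You assert that since $\clr_1$ is a PID and $\mu-1$, $\mu-k$ are non-associate irreducibles, the module $\clr_1/(\mu-1)(\mu-k)\clr_1$ has \emph{exactly two} nontrivial proper submodules, generated by $\mu-1$ and by $\mu-k$. But the lemma states the module has a \emph{unique} nontrivial proper submodule. This is not a slip of phrasing: the uniqueness is exactly what is used repeatedly in the proof of Lemma~\ref{lem:splitting P_1} (``Since $\cla_1^K$ has a unique nontrivial proper submodule $\langle \mu-1\rangle$, we have $\langle f(\mu)\rangle = \langle \mu-1\rangle$,'' etc.); if there were a second submodule, that argument would break. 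The source of your error is importing commutative reasoning into the skew Laurent ring $\clr_1 \cong \bbk_1[\mu^{\pm 1}]$. In a skew polynomial ring, $\langle\mu-k\rangle$ need not be a submodule of $\clr_1/(\mu-1)(\mu-k)\clr_1$ at all, because one would need $(\mu-1)(\mu-k) \in (\mu-k)\clr_1$, i.e.\ $(\mu-1)(\mu-k)=(\mu-k)g(\mu)$ for some $g$, and the obvious candidate $g=\mu-1$ fails since $(\mu-k)(\mu-1) \neq (\mu-1)(\mu-k)$ when $\alpha$ does not fix $k$. More generally, the submodule lattice of $\clr_1/(ab)\clr_1$ is governed by the \emph{left} divisors of $ab$ up to right units, and in the skew setting a degree-two polynomial can have an essentially unique factorization into linear factors — which is precisely what Cochran--Orr--Teichner establish and what gives the uniqueness claimed in~(3). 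This is also the noncommutative pitfall the paper itself flags in the introduction. So you should not try to derive the uniqueness from a two-prime CRT picture; it must be taken (like the cyclicity of $\cla_1^K$ and the specific value of $k$) as a computed fact from \cite{cot03,cot04}, just as the paper does.
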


We will also need the following proposition and corollary later. Recall that a ring is \emph{simple} if it has no nontrivial proper (two-sided) ideals. Also recall that by Proposition~\ref{prop:ru} we have $\clr_1=\bbk_1[\mu^{\pm 1}]$.

\begin{prop}
  \label{prop:simple}
  The ring $\clr_1$ is simple.
\end{prop}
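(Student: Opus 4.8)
The plan is to analyze the structure of $\clr_1 = \bbk_1[\mu^{\pm 1}]$ as a skew Laurent polynomial ring over the (skew) field $\bbk_1$, and show directly that any nonzero two-sided ideal must be all of $\clr_1$. First I would recall from Proposition~\ref{prop:ru} that $\clr_1 \cong \bbk_1[\mu^{\pm 1}]$, where $\bbk_1$ is the (skew) quotient field of $\bbz[\G_1,\G_1]$ and $\mu$ acts on $\bbk_1$ by an automorphism, say $\alpha$, coming from conjugation by $\mu$ in $\G_1$. The key point is that since $\G_1 = S \rtimes \G_0$ with $\G_0 = \la \mu \ra$ infinite cyclic acting nontrivially on $S \cong \bbq[t^{\pm 1}]/(t^2-3t+1)^m$ (with $\mu$ acting as multiplication by $t$), the automorphism $\alpha$ of $\bbk_1$ is of \emph{infinite order}: indeed $\alpha^j$ restricted to the image of $S$ in $\bbk_1$ is multiplication by $t^j$, and $t^j \ne 1$ in $\bbq[t^{\pm 1}]/(t^2-3t+1)^m$ for any $j \ne 0$ because $t^2-3t+1$ has roots $(3\pm\sqrt 5)/2$ which are not roots of unity. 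Hence $\alpha$ has infinite order.

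Next I would carry out the standard argument that a skew Laurent polynomial ring $F[\mu^{\pm 1}]$ over a field $F$ is simple precisely when the defining automorphism has infinite order. Let $I$ be a nonzero two-sided ideal of $\clr_1$. Pick a nonzero element $f \in I$ of minimal ``length'' (number of nonzero $\bbk_1$-coefficients when written as a Laurent polynomial in $\mu$); after multiplying by a unit $\mu^k$ we may assume $f = a_0 + a_1\mu + \cdots + a_d\mu^d$ with $a_0 \ne 0$ and $a_d \ne 0$. For any $c \in \bbk_1$ the element $cf - fc' \in I$, where $c'$ is chosen appropriately; more precisely, $cf - f\,\alpha^{-?}(\cdot)$ — the cleanest version is to consider $cf - f c$ for $c \in \bbk_1$: its $\mu^i$-coefficient is $(c - \alpha^i(c))a_i$. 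This has length strictly smaller than $d+1$ unless it is zero, so by minimality it is zero, forcing $\alpha^i(c) = c$ for all $i$ with $a_i \ne 0$ and all $c \in \bbk_1$. Since $\alpha$ has infinite order, $\alpha^i = \id$ forces $i = 0$, so $f = a_0 \in \bbk_1$ is a nonzero constant, hence a unit, hence $I = \clr_1$.

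The one subtlety I expect to be the main obstacle is justifying that the automorphism $\alpha$ of $\bbk_1$ genuinely has infinite order. It is not enough that $\mu$ acts with infinite order on $\G_1$ itself; one needs the induced action on the (skew) quotient field $\bbk_1$ of $\bbz[\G_1,\G_1] = \bbz[S] = \bbz[\bbq[t^{\pm 1}]/(t^2-3t+1)^m]$ to remain of infinite order. Since the group ring of a torsion-free abelian group embeds in its quotient field and $\alpha$ restricted to $\bbz[\G_1,\G_1]$ is induced by the automorphism $s \mapsto \mu s \mu^{-1}$ of $S$, which corresponds to multiplication by $t$ on $\bbq[t^{\pm 1}]/(t^2-3t+1)^m$, it suffices to observe that $t^j \not\equiv 1 \pmod{(t^2-3t+1)^m}$ for $j \ne 0$; this follows because the image of $t$ in $\bbq[t]/(t^2-3t+1)$ is an algebraic number that is not a root of unity (its minimal polynomial $t^2-3t+1$ is not cyclotomic), so its powers are distinct already modulo $(t^2-3t+1)$, a fortiori modulo $(t^2-3t+1)^m$. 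Therefore $\alpha^j \ne \id$ on $\bbz[\G_1,\G_1]$, hence on $\bbk_1$, for all $j \ne 0$, which completes the verification and the proof.
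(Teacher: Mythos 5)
Your proof is correct, and the key structural observation is the same as the paper's: $\clr_1 \cong \bbk_1[\mu^{\pm 1}]$ is a skew Laurent extension of a (commutative) field by an automorphism $\alpha$ of infinite order, and such a ring is simple. The route you take is the standard ``minimal-length plus commutator'' argument: pick $f \in I$ of minimal length, observe that $cf - fc$ has strictly smaller length because its $\mu^0$-coefficient vanishes, and conclude that $\alpha^i = \id$ for all $i$ in the support of $f$, forcing $f$ to be a unit. The paper instead invokes the fact (from Proposition~\ref{prop:ru}) that $\clr_1$ is a left and right PID, writes $I = f(\mu)\clr_1 = \clr_1 f(\mu)$ for a generator $f$ of least degree, and compares the extreme coefficients of the identity $bf(\mu) = f(\mu)c$ to get $\alpha^n(b) = \alpha^m(b)$ and hence $\alpha^{m-n} = \id$. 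Your version buys a small amount of self-containedness, since it does not need to appeal to the PID property. You also do something the paper does not: you actually justify that $\alpha$ has infinite order, by noting that $\alpha^j$ on $\bbz[\G_1,\G_1]$ is multiplication by $t^j$ on $\bbq[t^{\pm1}]/(t^2-3t+1)^m$ and that $t$ is not a root of unity there because $t^2-3t+1$ is not cyclotomic. The paper simply asserts ``$\alpha^n$ is not the identity map for any $n>0$'' without comment, so your additional check fills a small but genuine gap.
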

\begin{proof}
Note that $\bbk_1$ is commutative. However, $\clr_1$ is noncommutative: note that $S= \bbq[t^{\pm 1}]/(t^2-3t+1)^m$ is a right $\bbz\Gamma_0$--module, and that the semi-direct product $\Gamma_1=\bbq[t^{\pm 1}]/(t^2-3t+1)^m\rtimes\Gamma_0$, which can be embedded into $\bbq(t)/\bbq[t^{\pm 1}]\rtimes \G_0$ as a submodule, is defined by choosing the right multiplication (see \cite[p.453]{cot03}). Therefore, for $a\in \bbk_1$ we have $a\mu = \mu \alpha(a)$ for the automorphism $\alpha:\bbk_1\to\bbk_1$ which is multiplication by $t$ on the right. Also observe that $\alpha^n$ is not the identity map for any $n>0$.

To show that $\clr_1$ is simple, we should show that $\clr_1$ has no nontrivial proper (two-sided) ideal. Suppose to the contrary: let $I$ be a nontrivial proper ideal of $\clr_1$. Since $\alpha$ is an automorphism, the ring $\clr_1$ is a PID. Since $\clr_1$ is a PID, we have $I=\langle f(\mu)\rangle$ for some nonzero $f(\mu)\in\clr_1$. In fact, the element $f(\mu)$ has  least degree among the elements in $I$ and we have $\langle f(\mu)\rangle = f(\mu)\clr_1 = \clr_1f(\mu)$. Since $I\ne \clr_1$, one can see that $f(\mu)$ is not a unit in $\clr_1$. Therefore the degree of $f(\mu)$ is greater than 0, and hence we can write $f(\mu)=a_n\mu^n+\cdots +a_m\mu^m$ where $n<m$, $a_n\ne 0$, and $a_m\ne 0$. Let $b$ be a nonzero element in $\bbk_1$. Since $f(\mu)\clr_1=\clr_1 f(\mu)$, there is $c\in\bbk_1$ such that $b f(\mu)=f(\mu) c$.
Therefore, we have $\alpha^n(a_n)\alpha^n(b)=\alpha^n(a_n)c$ and $\alpha^m(a_m)\alpha^m(b)=\alpha^m(a_m)c$. Since neither $a_n$ nor $a_m$ is 0, we obtain $\alpha^n(b) = \alpha^m(b)$. Since $b$ is arbitrary, it follows that $\alpha^n = \alpha^m$ and $\alpha^{m-n}$ is the identity map, which is a contradiction.
\end{proof}

The corollary below follows from Proposition~\ref{prop:simple} and \cite[Prop 2.8 in p.496]{coh85}.
\begin{cor}\label{cor:cyclic}
  Every finitely generated right torsion $\clr_1$--module is cyclic.
\end{cor}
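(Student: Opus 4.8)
The statement to prove is Corollary~\ref{cor:cyclic}: every finitely generated right torsion $\clr_1$-module is cyclic, which is asserted to follow from Proposition~\ref{prop:simple} (that $\clr_1$ is simple) together with \cite[Prop.\ 2.8, p.\ 496]{coh85}.

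The plan is essentially to unpack the cited result from Cohn and check that its hypotheses are met by $\clr_1$. First I would recall that, by Proposition~\ref{prop:ru}, the ring $\clr_1 = \bbk_1[\mu^{\pm 1}]$ is a (left and right) principal ideal domain; in particular it is Noetherian on both sides, so finitely generated modules are finitely presented and the structure theory for modules over PIDs (in the noncommutative/Ore setting) applies. Over such a ring, any finitely generated right module $N$ decomposes as a direct sum of a free part and a torsion part, and a finitely generated torsion module further decomposes as a direct sum of cyclic modules $\clr_1/\mathfrak{q}_i\clr_1$ where each $\mathfrak{q}_i$ is (a power of) a prime. So the content of the corollary is that such a direct sum is automatically \emph{already} cyclic, i.e.\ that the summands can be amalgamated — and this is exactly the kind of statement that Cohn's Prop.\ 2.8 provides for simple Artinian-quotient situations. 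Concretely, the key point is that since $\clr_1$ is a simple principal ideal domain, every nonzero element is, up to similarity/units, "totally unramified" in the sense that distinct prime factors are never similar, and hence cyclic torsion modules with coprime (non-similar) annihilators combine by a Chinese-Remainder-type argument, while powers of a single prime already give a cyclic module. Thus the finitely generated torsion module, being a finite direct sum of cyclic primary pieces, is cyclic.

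The steps I would carry out, in order, are: (1) invoke Proposition~\ref{prop:ru} to record that $\clr_1$ is a PID (hence an Ore domain, two-sided Noetherian); (2) invoke Proposition~\ref{prop:simple} to record that $\clr_1$ is simple, so it has no nontrivial two-sided ideals — this forces every nonzero two-sided ideal to be all of $\clr_1$, which in the PID setting means there is essentially only one "prime up to similarity" obstruction to amalgamation, exactly the hypothesis Cohn needs; (3) quote \cite[Prop.\ 2.8, p.\ 496]{coh85} verbatim for this class of rings to conclude that every finitely generated torsion module is cyclic; (4) note that "right" versus "left" is symmetric here because $\clr_1$ is a two-sided PID and a two-sided simple ring, so the corollary holds on the right as stated. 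If one wanted a self-contained argument instead of citing Cohn, the decomposition-into-primary-cyclics step is standard for PIDs, and the amalgamation step uses that in a simple PID the distinct prime factors of any element are pairwise non-similar, so their cyclic modules have "comaximal" annihilators and the direct sum is cyclic with annihilator the (left or right) least common multiple.

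I expect the main obstacle to be purely expository: making sure the hypotheses of Cohn's Prop.\ 2.8 are stated in the same language as our situation. Cohn's framework is that of (noncommutative) principal ideal domains and their factor rings; the substantive input that makes the corollary true — beyond $\clr_1$ being a PID — is precisely simplicity, which rules out the appearance of genuinely different non-similar primes that would obstruct amalgamation of cyclic summands into a single cyclic module. So the bulk of the write-up is: (a) cite Proposition~\ref{prop:ru} for the PID structure, (b) cite Proposition~\ref{prop:simple} for simplicity, (c) cite \cite[Prop.\ 2.8, p.\ 496]{coh85}, and (d) remark that the result transfers between left and right modules by the two-sided PID property. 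No nontrivial new computation is needed; the work is in correctly matching our setup to the cited statement.
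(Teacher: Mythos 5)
Your main line --- invoke Proposition~\ref{prop:ru} for the PID structure of $\clr_1$, Proposition~\ref{prop:simple} for simplicity, and then cite \cite[Prop 2.8 in p.496]{coh85} --- is exactly what the paper does; there is no further argument in the paper beyond this citation. One caveat about your parenthetical self-contained sketch: the proposed mechanism (pairwise non-similar prime factors plus a Chinese--Remainder amalgamation) is not the actual reason the cited result holds. Elements of a simple PID can perfectly well have repeated, mutually similar atomic factors --- for instance $(\mu-1)^2\in\clr_1$ --- and the cyclicity of $\clr_1/p\clr_1\oplus\clr_1/p\clr_1$ is not explained by non-similarity of factors (which would in fact predict the opposite, as in the commutative case). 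What simplicity really buys is that every nonzero $\clr_1$-module has zero two-sided-ideal annihilator, so there is no ``bound'' obstructing the matrix-reduction step that merges two cyclic torsion summands into one. Since you propose to cite Cohn rather than re-prove the lemma, this imprecision does not affect the validity of your write-up, but the self-contained sketch as written would not expand into a correct argument.
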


The following theorem gives us the desired examples.
\begin{thm}\label{thm:example}
If $K$ is one of the knots in \cite{cot03, cot04} which are $(2)$--solvable but not $(2.5)$--solvable, then $K$ is not concordant to any knot whose Alexander polynomial has degree less than or equal to two. In particular, the knot $K$ has concordance genus 2.
\end{thm}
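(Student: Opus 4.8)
The plan is to argue by contradiction. Suppose $K$ is concordant to a knot $K'$ whose Alexander polynomial $\Delta_{K'}(t)$ has degree at most two. Then $K \# (-K')$ is slice, hence $(2.5)$--solvable, hence has vanishing $\rho$--invariants of order $2$ by Theorem~\ref{thm:4.6}. I want to apply Theorem~\ref{main} with the connected sum $L = K \# (-K')$ split as $K \# J$ where $J = -K'$; to do so I must verify that $L$ has splitting self-annihilating submodules of order $1$ for $K$. Once that is done, Theorem~\ref{main} forces $K$ to have vanishing $\rho$--invariants of order $2$, contradicting Lemma~\ref{lem:cot knot property}(4). The concordance genus statement then follows immediately: a genus~$1$ knot has Alexander polynomial of degree at most~$2$, and since $K$ itself has genus~$2$ by construction, the concordance genus of $K$ is exactly~$2$.

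So the real content is checking the splitting hypothesis of order $0$ and order $1$ for $K$. For order $0$: by Lemma~\ref{lem:cot knot property}(1), $\Delta_K(t) = (t^2-3t+1)^2$, and $\Delta_{K'}(t)$ has degree $\le 2$. I would observe that the only way $\Delta_{K'}$ could share a factor with $(t^2-3t+1)$ is if $\Delta_{K'}$ is a unit times $(t^2-3t+1)$ (up to the usual indeterminacy and the symmetry $\Delta(t) \doteq \Delta(t^{-1})$). If the polynomials are genuinely coprime, then $L$ has splitting self-annihilating submodules of order $0$ for $K$ (and for $J$) by the remark after Definition~\ref{def:splitting sas}, citing \cite{kk08}. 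The case where $\Delta_{K'} \doteq t^2 - 3t + 1$ needs a separate, direct argument: here $\cla_0^K = \bbq[t^{\pm1}]/(t^2-3t+1)^2$ and $\cla_0^J = \bbq[t^{\pm1}]/(t^2-3t+1)$, and I would compute the self-annihilating submodules of $\cla_0^L = \cla_0^K \oplus \cla_0^J$ directly using the classical Blanchfield form, showing every such submodule splits as a direct sum (this is a finite commutative-algebra computation over the local ring $\bbq[t^{\pm1}]_{(t^2-3t+1)}$, so it is routine if slightly lengthy).

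For order $1$: let $x_0$ be an element of $\cla_0^L$ belonging to $K$, i.e. $x_0 \in P_0^K \oplus 0$ for some self-annihilating $P_0^L = P_0^K \oplus P_0^J$, and let $\phi_1$ be the associated representation. By Theorem~\ref{thm:splitting H_1}(1) we have $\cla_1^L = \cla_1^K \oplus \cla_1^J$, with $\cla_1^J = \cla_0^J \otimes_{\clr_0} \clr_1$. By Lemma~\ref{lem:cot knot property}(3), $\cla_1^K$ is cyclic over $\clr_1$ of the form $\clr_1/(\mu-1)(\mu-k)\clr_1$ with a unique proper submodule $\langle \mu-1 \rangle$, which is its unique self-annihilating submodule. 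I would similarly identify $\cla_1^J$: since $J = -K'$ and $\Delta_{K'}$ has degree $\le 2$, the module $\cla_0^J$ is small (dimension $\le 2$ over $\bbq(t)$... here I should be careful to phrase this over $\clr_0$), so $\cla_1^J = \cla_0^J \otimes_{\clr_0}\clr_1$ is a finitely generated torsion $\clr_1$--module, hence cyclic by Corollary~\ref{cor:cyclic}, of the form $\clr_1/\langle g(\mu) \rangle$ where $g$ encodes $\Delta_{K'}$. Then I would show that any self-annihilating submodule $P_1^L \subset \cla_1^K \oplus \cla_1^J$ must split, by analyzing how $\clr_1$--submodules of the direct sum interact with the two Blanchfield forms $\bl_1^K$ and $\bl_1^J$; the key leverage is that $\cla_1^K$ and $\cla_1^J$ have "coprime" orders over $\clr_1$ (the polynomial $(\mu-1)(\mu-k)$ versus whatever annihilates $\cla_1^J$ — note $\mu - 1$ divides both only if $\cla_1^J$ has $\mu-1$ in its order, which I must rule out or handle), so that the projection of $P_1^L$ to each summand is controlled.

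The main obstacle I anticipate is the order-$1$ splitting verification, and within it the possibility that the "orders" of $\cla_1^K$ and $\cla_1^J$ over the noncommutative PID $\clr_1$ are not literally coprime — in particular both might involve the factor $\mu - 1$ (since $\cla_1^J = \cla_0^J \otimes \clr_1$ always has $H_0$-type behavior producing a $\mu-1$ factor, and $\cla_1^K$'s unique self-annihilator is exactly $\langle \mu-1\rangle$). I expect this is handled by observing that $\bl_1^J$ restricted to the $\mu-1$ part is governed by $\bl_0^J$, which vanishes because $J = -K'$ with $\Delta_{K'}$ of degree $\le 2$ forces... no — rather, I expect the resolution is that the relevant self-annihilating submodules are still forced to be "diagonal" because the Blanchfield pairing between the $K$-summand and the $J$-summand is identically zero (Theorem~\ref{thm:splitting H_1}(1) gives $\bl_1^L = \bl_1^K \oplus \bl_1^J$, so there is no cross pairing to begin with), and then a self-annihilating submodule of an orthogonal direct sum of nonsingular forms splits provided each summand has the property that its submodules are "rigid" enough — which for $\cla_1^K$ is exactly Lemma~\ref{lem:cot knot property}(3) (unique proper submodule), and for $\cla_1^J$ I can reduce to the classical order-$0$ computation via the factoring-through statement and Theorem~\ref{thm:factoring through}(2). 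Making this last reduction precise, and in particular pinning down the self-annihilating submodules of a module of the form $(\cla_0^J \otimes \clr_1)$ with its induced form, is where I expect to spend the bulk of the effort.
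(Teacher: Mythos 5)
Your high-level strategy agrees with the paper's: reduce to a contradiction by applying Theorem~\ref{main} to $L=K\#J$ with $J=-K'$, and identify the bulk of the work as verifying that $L$ has splitting self-annihilating submodules of order~$1$ for $K$. However, there is a genuine gap that undercuts both the order-$0$ and order-$1$ parts of your plan.

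You overlook the following crucial reduction: since $K$ is $(2)$--solvable and $K'$ is assumed concordant to $K$, the knot $K'$ is $(2)$--solvable, hence $(0.5)$--solvable, hence \emph{algebraically slice}. A degree-$2$ Alexander polynomial of an algebraically slice knot must satisfy the Fox--Milnor condition and therefore factors into linear terms; concretely, the paper takes $\Delta_J(t) = (nt-(n-1))((n-1)t-n)$ for some integer $n\ge 2$. This does two things at once. First, it makes $\Delta_J$ automatically coprime to $\Delta_K=(t^2-3t+1)^2$ (since $t^2-3t+1$ is irreducible over $\bbq$ while $\Delta_J$ splits into linear factors), so the ``separate argument'' you propose for $\Delta_{K'}\doteq t^2-3t+1$ is not needed --- in fact that case never arises, because $t^2-3t+1$ is not a Fox--Milnor polynomial, and if it did arise $\cla_0^J$ would be a field with no nontrivial self-annihilating submodule, which is incompatible with $L$ being algebraically slice. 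Second and more importantly, the explicit linear factorization of $\Delta_J$ is what makes the order-$1$ splitting provable: the paper's Lemma~\ref{lem:splitting P_1} is a four-case analysis on $\dim_{\bbk_1}\la f(\mu)\ra$ and $\dim_{\bbk_1}\la g(\mu)\ra$ for a cyclic generator $(f(\mu),g(\mu))$ of $P_1$, and in the critical Case~(3) it derives a contradiction by expanding products such as $(\mu-\tfrac{n}{n-1}\tfrac{a}{a^\mu})(\mu-\tfrac{n-1}{n}\tfrac{a^\mu}{a^{\mu^2}})$ and invoking~\cite[Lemma~6.3]{cot03}; this computation cannot even be set up without knowing $\Delta_J$ factors as $(n\mu-(n-1))((n-1)\mu-n)$.

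Relatedly, your order-$1$ discussion remains a sketch where the real content lives. You correctly anticipate that the ``orders'' of $\cla_1^K$ and $\cla_1^J$ over $\clr_1$ may not be coprime --- indeed $\mu-1$ figures in both, since $\cla_1^K$ has $\la\mu-1\ra$ as its unique proper submodule and $\cla_1^J=\cla_0^J\otimes_{\clr_0}\clr_1$ can carry a $\mu-1$ factor --- but you do not resolve the concern, and simple coprimeness reasoning over the noncommutative PID $\clr_1$ will not close it. The paper handles this by showing $\clr_1$ is simple (Proposition~\ref{prop:simple}), concluding $P_1$ is cyclic (Corollary~\ref{cor:cyclic}), and then running the dimension case analysis; the noncommutativity (the automorphism $\alpha$ of $\bbk_1$ with no finite order) is used essentially in the contradictions, not merely tolerated. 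Without the algebraic-sliceness constraint on $\Delta_J$ and a concrete substitute for the Lemma~\ref{lem:splitting P_1} case analysis, your proof does not go through.
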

\begin{proof}
Let $J$ be a knot whose Alexander polynomial has degree less than or equal to two. If $J$ has trivial Alexander polynomial, then by Freedman's work $J$ is slice. Since $K$ is not $(2.5)$--solvable, it implies that $K$ is not concordant to $J$. From now on we assume that $J$ has degree 2 Alexander polynomial.

Suppose that $J$ is concordant to $K$. Since $K$ is $(2)$--solvable, so is $J$, and this implies that $J$ is algebraically slice (see \cite[Remark 1.3.2]{cot03}). Therefore, we may assume that the Alexander polynomial of $J$ is $\Delta_J(t) = (nt-(n-1))((n-1)t-n)$ for some integer $n\ge 2$. Since $K$ is concordant to $J$, the connected sum $K\# (-J)$ is slice. For convenience, we re-label the knot $-J$ by $J$. Note that this re-labeling does not change $\Delta_J(t)$.

Let $L=K\# J$. Now we have that $L$ is slice, and in particular it is $(2.5)$--solvable. By Theorem~\ref{thm:4.6}, the knot $L$ has vanishing $\rho$--invariants of order 2. We will show that $L$ has splitting self-annihilating submodules of order $1$ for $K$. By Theorem~\ref{main}, this will imply that $K$ has vanishing $\rho$--invariants of order $2$, which is a contradiction by Lemma~\ref{lem:cot knot property}(4), and it will complete the proof.

Henceforth, we use the notations given in the first paragraph of Section~\ref{sec:vanishing}.  Since $\Delta_K(t) = (t^2-3t+1)^2$ and $\Delta_J(t) = (nt-(n-1))((n-1)t-n)$ are coprime, by \cite[Theorem 3.1]{kk08} the knot $L$ has vanishing self-annihilating submodules of order 0 for $K$. That is, if $P_0$ is a self-annihilating submodule of $\cla_0^L$, then $P_0 = P_0^K\oplus P_0^J$ where $P_0^K$ and $P_0^J$ are self-annihilating submodules of $\cla_0^K$ and $\cla_0^J$, respectively. Note that $P_0^K$ is the unique nontrivial proper submodule of $\cla_0^K$ by Lemma~\ref{lem:cot knot property}(2).

Let $P_0$ be a self-annihilating submodule of $\cla_0^L$ and let $x_0\in P_0^K\oplus 0\subset P_0$. Let $\phi_1\colon \pi_1(M)\to \G_1 =\bbq[t^{\pm 1}]/(t^2-3t+1)^m\rtimes\Gamma_0$ be a representations associated to $\phi_0$ and $x_0$. Then $\cla_1^L = \cla_1^K\oplus \cla_1^J$ by Theorem~\ref{thm:splitting H_1}.

Let $P_1$ be a self-annihilating submodule of $\cla_1^L$ which is associated to $\phi_1$. Let $P_1^K=P_1\cap (\cla_1^K\oplus 0)$ and $P_1^J=P_1\cap(0\oplus \cla_1^J)$. We will show that $P_1=P_1^K\oplus P_1^J$, and that $P_1^K$ and $P_1^J$ are self-annihilating submodules of $\cla_1^K$ and $\cla_1^J$, respectively. This will imply that $L$ has vanishing self-annihilating submodules of order 1 for $K$.

It is obvious that $P_1^K\oplus P_1^J\subset P_1$. To show $P_1\subset P_1^K\oplus P_1^J$, we assume Lemma~\ref{lem:splitting P_1} below which asserts that $P_1^K$ is the unique nontrivial proper submodule of $\cla_1^K$ (see Lemma~\ref{lem:cot knot property}~(3)), which is generated by $\mu-1$. Note that Lemma~\ref{lem:splitting P_1}, combined with Lemma~\ref{lem:cot knot property}~(3), also implies that $P_1^K=(P_1^K)^\perp$. 

Let $(x,y)\in P_1\subset \cla_1^K\oplus \cla_1^J$. Then for every $x'\in P_1^K$, we have $\bl_1^L( (x,y),(x',0))=0$ since $(x',0)\in P_1$ and $P_1=P_1^\perp$. Since $\bl_1^L( (x,y),(x',0)) = \bl_1^K(x,x') + \bl_1^J(y,0) = \bl_1^K(x,x')$, we have $\bl_1^K(x,x')=0$ for all $x'\in P_1^K$, and hence $x\in (P_1^K)^\perp$. Since $P_1^K=(P_1^K)^\perp$, we have $x\in P_1^K$. By the definition of $P_1^K$, it follows that $(x,0)\in P_1$. Since $(0,y)=(x,y)-(x,0)$, we also have $(0,y)\in P_1$. This shows that if $(x,y)\in P_1$, then $x\in P_1^K$ and $y\in P_1^J$.
Therefore $P_1\subset P_1^K\oplus P_1^J$, and it follows that $P_1= P_1^K\oplus P_1^J$.

Now we show that $P_1^J=(P_1^J)^\perp$. It is clear that $P_1^J\subset (P_1^J)^\perp$ since $P_1^J\subset P_1$ and $P_1 = P_1^\perp$. Let $y\in (P_1^J)^\perp$. Then,
for every $(x',y')\in P_1^K\oplus P_1^J=P_1$, we have $\bl_1^L( (0,y),(x',y'))=\bl_1^K(0,x') + \bl_1^{J}(y,y')=\bl_1^{J}(y,y')$, and $\bl_1^{J}(y,y')=0$ since $y'\in P_1^J$. Therefore $(0,y)\in P_1^\perp=P_1$, and hence $y\in P_1^J$. This shows $(P_1^J)^\perp=P_1^J$.
\end{proof}

\begin{lem}\label{lem:splitting P_1}
The module $P_1^K$ defined in Theorem~\ref{thm:example} is the unique nontrivial proper submodule of $\cla_1^K$.
\end{lem}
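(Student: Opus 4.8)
The plan is to verify that $P_1^K$ is a \emph{nontrivial proper} $\clr_1$--submodule of $\cla_1^K$; since $\cla_1^K$ has a unique nontrivial proper submodule by Lemma~\ref{lem:cot knot property}(3), this forces $P_1^K$ to be that submodule. Throughout I keep the identifications set up in the proof of Theorem~\ref{thm:example}: by Theorem~\ref{thm:splitting H_1} one has $\cla_1^L=\cla_1^K\oplus\cla_1^J$ and $\bl_1^L=\bl_1^K\oplus\bl_1^J$, both $\bl_1^K$ and $\bl_1^J$ are nonsingular by Theorem~\ref{thm:Blanchfield form}, $P_1=P_1^\perp$ inside $\cla_1^L$, and $P_1^K=P_1\cap(\cla_1^K\oplus 0)$.

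The first step is to describe $\cla_1^J$ explicitly. Since $\phi_1$ restricted to $\pi_1(M_J)$ factors through $\phi_0$, Theorem~\ref{thm:splitting H_1} gives $\cla_1^J=\cla_0^J\otimes_{\clr_0}\clr_1$. As $\Delta_J(t)=(nt-(n-1))((n-1)t-n)$ is a product of two coprime degree--one polynomials (here $n\ge 2$), the classical Alexander module $\cla_0^J$ splits as $\clr_0/(nt-(n-1))\clr_0\oplus\clr_0/((n-1)t-n)\clr_0$, and hence $\cla_1^J\cong S_1\oplus S_2$ where $S_i=\clr_1/f_i\clr_1$ with $f_1=n\mu-(n-1)$ and $f_2=(n-1)\mu-n$. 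Each $S_i$ is one--dimensional over $\bbk_1$, since its defining element has $\mu$--degree one with a unit leading coefficient, so each $S_i$ is a \emph{simple} $\clr_1$--module; in particular $\cla_1^J$ is semisimple of length two, and $S_1\oplus 0$ and $0\oplus S_2$ are two distinct nontrivial proper submodules of it.

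Next I would show $P_1^K\ne\cla_1^K$: if $\cla_1^K\oplus 0\subseteq P_1$ then $P_1=P_1^\perp\subseteq(\cla_1^K\oplus 0)^\perp=0\oplus\cla_1^J$, using nonsingularity of $\bl_1^K$, which forces $\cla_1^K=0$, absurd. It remains to show $P_1^K\ne 0$, which is the crux. Assume $P_1^K=0$. The second--coordinate projection $P_1\to\cla_1^J$ then has kernel $P_1\cap(\cla_1^K\oplus 0)=P_1^K=0$, so $P_1$ embeds as a submodule of $\cla_1^J$. For the first--coordinate projection $\pi_K\colon P_1\to\cla_1^K$, the identity $\bl_1^K(\pi_K(p),x')=\bl_1^L(p,(x',0))$ shows $\pi_K(P_1)^\perp=P_1^K=0$ inside $\cla_1^K$; since $\bl_1^K$ is nonsingular, applying the orthogonal complement once more gives $\pi_K(P_1)=(\pi_K(P_1)^\perp)^\perp=0^\perp=\cla_1^K$, so $\cla_1^K$ is a quotient of $P_1$. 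Hence $\cla_1^K$ is a subquotient of the length--two semisimple module $S_1\oplus S_2$; but every subquotient of $S_1\oplus S_2$ is $0$, simple, or isomorphic to $S_1\oplus S_2$, and none of these has exactly one nontrivial proper submodule, whereas $\cla_1^K$ does. This contradiction gives $P_1^K\ne 0$. Combining with the previous paragraph, $P_1^K$ is a nontrivial proper submodule of $\cla_1^K$, and by Lemma~\ref{lem:cot knot property}(3) it is the unique one.

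The delicate part is this last step, and it leans on two structural facts in the noncommutative setting: that nonsingularity of $\bl_1^K$ yields the double--annihilator identity $\pi_K(P_1)^{\perp\perp}=\pi_K(P_1)$ over the principal ideal domain $\clr_1$ (valid because $\clk_1/\clr_1$ is an injective cogenerator for finitely generated torsion $\clr_1$--modules), and that $\cla_1^J$ is semisimple with a transparent submodule lattice, contrasted with the ``uniserial'' rigidity of $\cla_1^K$ (exactly three submodules). One must also dispatch the routine but genuinely noncommutative bookkeeping: that $\clr_1/f_i\clr_1$ really is one--dimensional over $\bbk_1$, and that the coprime factorization of $\Delta_J$ persists after the base change $\clr_0\hookrightarrow\clr_1$ so that $\cla_1^J\cong S_1\oplus S_2$ holds on the nose.
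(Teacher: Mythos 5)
Your proof is correct, and it takes a genuinely different route from the paper's. The paper first establishes that $\clr_1$ is a simple ring (Proposition~\ref{prop:simple}) so that every finitely generated torsion $\clr_1$--module is cyclic (Corollary~\ref{cor:cyclic}), writes $P_1=\langle(f(\mu),g(\mu))\rangle$, and then runs a four--way case analysis on $\dim_{\bbk_1}\langle f(\mu)\rangle$ and $\dim_{\bbk_1}\langle g(\mu)\rangle$, invoking \cite[Lemma 6.3]{cot03} and explicit polynomial manipulation in $\bbk_1$ to rule out the bad cases. You instead avoid any explicit generator: you observe that since $\Delta_J$ is a product of two distinct degree--one primes and $\phi_1|_{\pi_1(M_J)}$ factors through $\phi_0$, the module $\cla_1^J=\cla_0^J\otimes_{\clr_0}\clr_1$ is a direct sum of two simple ($\bbk_1$--one--dimensional) $\clr_1$--modules, hence semisimple, whereas $\cla_1^K$ is visibly uniserial of length two (Lemma~\ref{lem:cot knot property}(3)) and therefore \emph{not} semisimple; you then use the orthogonality relation $\pi_K(P_1)^\perp=P_1^K$ together with the double--annihilator identity to show that $P_1^K=0$ would exhibit $\cla_1^K$ as a quotient of a submodule of the semisimple $\cla_1^J$, a contradiction. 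This is shorter, more structural, and in fact sidesteps the need for Proposition~\ref{prop:simple} and Corollary~\ref{cor:cyclic} entirely. The one point you should make fully explicit is the double--annihilator identity $Q^{\perp\perp}=Q$ for submodules of a finitely generated torsion module carrying a nonsingular $\clk_1/\clr_1$--valued form: the inclusion $Q\subseteq Q^{\perp\perp}$ follows from (Hermitian) symmetry of $\bl_1^K$, and equality follows by a $\bbk_1$--dimension count using that $\clk_1/\clr_1$ is a divisible, hence injective, $\clr_1$--module over the (noncommutative) PID $\clr_1$, so that $\dim_{\bbk_1}Q^\perp=\dim_{\bbk_1}\cla_1^K-\dim_{\bbk_1}Q$. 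With that spelled out, the argument is complete and trades the paper's computations for a clean semisimple--versus--uniserial dichotomy.
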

\begin{proof}
Recall that $P_1$ is a self-annihilating submodule of $\cla_1^L$ and $\cla_1^L=\cla_1^K\oplus \cla_1^J$. By  Theorem~\ref{thm:splitting H_1} and our choice of $x_0$, we have $\cla_1^J = \cla_0^J\otimes_{\clr_0}\clr_1$. Therefore, we have $A_1^J=\clr_1/(n\mu-(n-1))((n-1)\mu-n)\clr_1$ as $\clr_1$-modules where $\clr_1=\bbk_1[\mu^{\pm 1}]$ and $\dim_{\bbk_1} \cla_1^J= 2$. By Lemma~\ref{lem:cot knot property}(3), we have $\dim_{\bbk_1} \cla_1^K= 2$. Since $\cla_1^L=\cla_1^K\oplus \cla_1^J$, we have $\dim_{\bbk_1} \cla_1^L =4$.
Since $P_1=P_1^\perp$, we have $\dim_{\bbk_1} P_1 = \frac{1}{2} \dim_{\bbk_1}\cla_1^L$, and therefore $\dim_{\bbk_1} P_1 = 2$. Since $\bl_1^L$, $\bl_1^K$, and $\bl_1^J$ are nonsingular, we see that
\begin{equation}
   \label{eq:ex1}
   P_1\neq \cla_1^K\oplus 0 \text{ and }
   P_1\neq 0\oplus \cla_1^J.
\end{equation}

By Corollary~\ref{cor:cyclic}, the module $P_1$ is cyclic, that is, $P_1=\langle (f(\mu),g(\mu))\rangle$ for some $(f(\mu),g(\mu))\in \cla_1^K\oplus  \cla_1^J$ as right $\clr_1$-modules.
By (\ref{eq:ex1}), we have $f(\mu)\ne 0$ and $g(\mu)\ne 0$.  Since $\dim_{\bbk_1} \cla_1^K = \dim_{\bbk_1} \cla_1^J= 2$, for $\la f(\mu)\ra\subset \cla_1^K$ and $\la g(\mu)\ra \subset \cla_1^J$, we have $\dim_{\bbk_1} \la f(\mu)\ra \in\{1,2\}$,  and $\dim_{\bbk_1} \la g(\mu)\ra \in\{1,2\}$.

Case (1): $\dim_{\bbk_1}\langle f(\mu)\rangle =2$ and $\dim_{\bbk_1}\langle
g(\mu)\rangle =1$.

Then, in $\clr_1$ there exists an element $h(\mu)$ (of degree 1) such that  $f(\mu)h(\mu)\ne 0$ in $\cla_1^K$ and $g(\mu)h(\mu)=0$ in  $\cla_1^J$. Therefore $(f(\mu),g(\mu))\cdot h(\mu) =(f(\mu)h(\mu),0)\in P_1$. Since $\dim_{\bbk_1} \cla_1^K=2$, we have $\dim_{\bbk_1}\langle f(\mu)h(\mu)\rangle\in \{1,2\}$. Since $P_1\neq \cla_1^K\oplus 0$ by (\ref{eq:ex1}), one can see that $\dim_{\bbk_1}\langle f(\mu)h(\mu)\rangle\ne 2$. Therefore, we have  $\dim_{\bbk_1}\langle f(\mu)h(\mu)\rangle =1$. Therefore $\la f(\mu)h(\mu)\ra$ is a nontrivial proper submodule of $\cla_1^K$. By Lemma~\ref{lem:cot knot property}(3), the module $\cla_1^K$ has a unique nontrivial proper submodule $\la \mu-1\ra$, and therefore in $\cla_1^K$ we have $f(\mu)h(\mu)=(\mu-1) a$ for some nonzero $a\in \bbk_1$.  Since $(f(\mu),g(\mu))\in P_1$, $(f(\mu)h(\mu),0)=((\mu-1)a,0)\in P_1$, and $P_1=P_1^\perp$, it follows that 
\[
\bl_1^L( (f(\mu),g(\mu)),( (\mu-1)a,0))=0,
\] 
and hence $\bl_1^K(f(\mu),(\mu-1)a)=0$. This implies that $f(\mu)\in \langle \mu-1\rangle^\perp$. By Lemma~\ref{lem:cot knot property}(3), the submodule $\langle \mu -1\rangle$ is self-annihilating. Therefore, we have $f(\mu)\in\langle \mu-1\rangle$. Since $\dim_{\bbk_1}\langle f(\mu)\rangle =2$ and $\dim_{\bbk_1}\langle \mu -1\rangle =1$, it is a contradiction.

Case (2): $\dim_{\bbk_1}\langle f(\mu)\rangle =1$ and $\dim_{\bbk_1}\langle
g(\mu)\rangle =2$.

Since $\cla_1^K$ has a unique nontrivial proper submodule $\langle \mu -1\rangle$, we have $\la f(\mu)\ra = \la \mu-1\ra$. Therefore we have $f(\mu)=(\mu-1)a$ for some $a\in \bbk_1$. Since $\dim_{\bbk_1}\langle g(\mu)\rangle =2=\dim_{\bbk_1}\cla_1^J$, we have $\la g(\mu) \ra = \cla_1^J$. Since $(f(\mu),g(\mu))\in P_1$ and $\la \mu-1\ra = \la \mu-1\ra^\perp$, we have
\begin{align*}
   0 & =\bl_1^L((f(\mu),g(\mu)),(f(\mu),g(\mu))) \\
   & = \bl_1^L(((\mu-1)a,g(\mu)),((\mu-1)a,g(\mu)))\\
   & =\bl_1^K((\mu-1)a,(\mu-1)a)+\bl_1^J(g(\mu),g(\mu))\\
   & = \bl_1^J(g(\mu),g(\mu)).
\end{align*}
Therefore $g(\mu)\in \la g(\mu)\ra^\perp = (\cla_1^J)^\perp  = 0$, which is a contradiction.

Case (3): $\dim_{\bbk_1}\langle f(\mu)\rangle =2$ and $\dim_{\bbk_1}\langle
g(\mu)\rangle =2$.

For an element $h(\mu)\in \clr_1$, we abuse notation so that $h(\mu)$ also denotes the class in $\cla_1^J$ which is represented by $h(\mu)$. Similarly in $\cla_1^K$. 

Since $\dim_{\bbk_1}\langle
g(\mu)\rangle =2=\dim_{\bbk_1}\cla_1^J$, we have $\langle g(\mu))\rangle = \cla_1^J$, and therefore there exists $j(\mu)\in \clr_1$ such that $g(\mu)j(\mu) =1 $ in $\cla_1^J$. Therefore, there exists $h(\mu)\in \langle f(\mu)\rangle$ such that $(h(\mu),1)\in P_1$. Since $\dim_{\bbk_1}\langle (h(\mu),1)\rangle \ge 2$, this implies that $\langle (h(\mu),1)\rangle =P_1$. By (\ref{eq:ex1}), one sees that $h(\mu)\ne 0$. Therefore $\dim_{\bbk_1}\langle h(\mu)\rangle\in \{1,2\}$. Case (2) implies that $\dim_{\bbk_1}\langle h(\mu)\rangle =2$, since $\dim_{\bbk_1}\langle 1\rangle=2$ where $\langle 1\rangle$ is considered as an $\clr_1$-submodule of $\cla_1^J$. Therefore $\langle h(\mu)\rangle=\cla_1^K$.

Since $\cla_1^K$ has a unique nontrivial proper submodule $\la \mu-1\ra$,  we may assume that in $\cla_1^K$ either $h(\mu)=a$ for some nonzero $a\in \bbk_1$, or $h(\mu)=\mu a+b$ for some $a,b\in \bbk_1$ with $a\ne 0$ and $b\ne -a$.

First suppose $h(\mu)=a$, and hence $P_1 = \la (a,1)\ra$. Let $h'(\mu)= a\Delta_J(\mu)$ in $\cla_1^K$.
Since $\Delta_J(\mu)=0$ in $\cla_1^J$, we have  $(h'(\mu),0)\in P_1$. It follows from (\ref{eq:ex1}) that $\la h'(\mu)\ra \ne \cla_1^K$.

Suppose $h'(\mu)\ne 0$ in $\cla_1^K$. Then $\la h'(\mu)\ra=\la\mu-1\ra$ since $\cla_1^K$ has a unique nontrivial proper submodule $\la \mu-1\ra$ . Therefore, in $\cla_1^K$ we have $h'(\mu) = (\mu-1)b$ for some  $b\in \bbk_1$, and we obtain $(\mu-1,0)\in P_1$. This shows $\bl_1^L( (a,1),(\mu-1,0))=0$, and hence $\bl_1^K(a,\mu-1)=0$. This implies that $a\in \langle \mu-1\rangle^\perp=\langle \mu-1\rangle$, which is a contradiction. Therefore, $h'(\mu)=0$ in $\cla_1^K$. Since $\alpha(m) = m$ for all $m\in \bbz$, in $\cla_1^K$ we have
\[
a( (n-1)\mu-n)(n\mu-(n-1))= a(n\mu-(n-1))( (n-1)\mu -n)=0.
\]
Note that since we are viewing $\cla_1^K$ as a right $\clr_1$-module and $\alpha (a)\ne a$ in general, we cannot cancel $a$ in the above equation.

Recall that $\bbk_1$ is commutative. For $c\in \bbk_1$, we write $c^\mu$ for $\alpha(c)$ for convenience.
Now it follows that in $\cla_1^K=\clr_1/(\mu-1)(\mu-k)\clr_1$ we have
\[
\left( \mu-\frac{n}{n-1}\frac{a}{a^\mu} \right)
 \left( \mu-\frac{n-1}{n}\frac{a^\mu}{a^{\mu^2}} \right)
=
\left( \mu-\frac{n-1}{n}\frac{a}{a^\mu} \right)
 \left( \mu-\frac{n}{n-1}\frac{a^\mu}{a^{\mu^2}} \right)
 =0.
\]
Therefore, in $\clr_1$ we have
\begin{align*}
\left( \mu-\frac{n}{n-1}\frac{a}{a^\mu} \right)
 \left( \mu-\frac{n-1}{n}\frac{a^\mu}{a^{\mu^2}} \right)
 &= \left( \mu-\frac{n-1}{n}\frac{a}{a^\mu} \right)
 \left( \mu-\frac{n}{n-1}\frac{a^\mu}{a^{\mu^2}} \right) \\
 &=(\mu-1)(\mu-k).
\end{align*}
By \cite[Lemma 6.3]{cot03}, it follows that in $\bbk_1$ we have
\[
\frac{n}{n-1}\frac{a}{a^\mu}=\frac{n-1}{n}\frac{a}{a^\mu}=1.
\]
Since $a\ne 0$ and $\alpha$ is an automorphism, it is a contradiction.

Now suppose that $h(\mu)=\mu a + b$ for some $a, b\in \bbk_1$ with $a\ne 0$ and $b\ne -a$. Since $\mu a+ b\not\in \langle \mu -1\rangle$, as in the previous case of $h(\mu)=a$, in $\cla_1^K$ we obtain that
\[
(\mu a+b)(n\mu -(n-1))( (n-1)\mu-n)=
 (\mu a+b)( (n-1)\mu-n)(n\mu-(n-1))=0.
\]
This implies that in $\cla_1^K$ we have
\[
(\mu^2 n a^\mu+\mu(b^\mu n-a(n-1))-b(n-1))( (n-1)\mu-n)=0
\]
and
\[
(\mu^2(n-1)a^\mu + \mu(b^\mu(n-1)-an)-bn) (n\mu-(n-1))=0.
\]
Since $(\mu-1)(\mu-k)=0$ in $\cla_1^K$, we have $\mu^2=\mu(k+1)-k$ and
\[
(\mu(n(k+1)a^\mu+b^\mu n-a(n-1))-(nka^\mu + b(n-1))) ( (n-1)\mu -n)=0
\]
and
\[
(\mu( (n-1)(k+1)a^\mu+b^\mu(n-1)-an)-( (n-1)ka^\mu+bn)) (n\mu-(n-1)) =0.
\]
Again by \cite[Lemma 6.3]{cot03}, one can deduce that in $\bbk_1$ we have
\[
n(k+1)a^\mu + b^\mu n-a(n-1) = nka^\mu + b(n-1)
\]
and
\[
(n-1)(k+1)a^\mu + b^\mu(n-1)-an = (n-1)ka^\mu+bn.
\]
Therefore, in $\bbk_1$ we have  $(a^\mu + b^\mu)n=(a+b)(n-1)$ and $(a^\mu+b^\mu) (n-1) = (a+b)n$. Since $a+b\ne 0$, we have
\[
\frac{n-1}{n}=\frac{a^\mu+b^\mu}{a+b}=\frac{n}{n-1},
\]
which is a contradiction.

Case (4): $\dim_{\bbk_1}\langle f(\mu)\rangle =1$ and $\dim_{\bbk_1}\langle
g(\mu)\rangle =1$.

Since $\dim_{\bbk_1}\langle g(\mu)\rangle =1$, there exists $a\in \bbk_1$ such that $g(\mu)(\mu-a)=0$ in $\cla_1^J$. Since $\dim_{\bbk_1}P_1=2$ and $P_1=\langle(f(\mu),g(\mu))\rangle$, it follows that $f(\mu)(\mu-a)\ne 0$ in $\cla_1^K$ and $(f(\mu)(\mu-a),0)\in P_1$. By (\ref{eq:ex1}), we have $\dim_{\bbk_1}\langle f(\mu)(\mu-a)\rangle =1$. Since $\cla_1^K$ has a unique nontrivial proper submodule $\la \mu-1\ra$,  we obtain that $f(\mu)(\mu-a)=(\mu-1)b$ for some $b\in \bbk_1$. Therefore $(\mu-1,0)\in P_1$, and hence $\mu-1\in P_1^K$. It follows that $P_1^K\ne 0$. By (\ref{eq:ex1}), we have $P_1^K\ne \cla_1^K$. Therefore, $P_1^K$ is a nontrivial proper submodule of $\cla_1^K$, and hence $P_1^K=\langle \mu-1\rangle$.

From Cases (1)--(4), we conclude that $P_1^K=\langle \mu-1\rangle$.
\end{proof}

\subsection*{Acknowledgements}

We wish to thank Jae Choon Cha, Tim Cochran, Chuck Livingston, and Kent Orr for helpful conversations. The first author was supported by the National Research Foundation of Korea(NRF) grant funded by the Korea government(MEST) (No.~2009--0074487). The second author was supported by Basic Science Research Program through the National Research Foundation of Korea (NRF) grant funded by the Ministry of Education, Science and Technology (No.~2012R1A1A001747 and No.~20120000341). This paper was written as part of Konkuk University's research support program for its faculty on sabbatical leave in 2013.

\end{document}